\theoremstyle{plain}
\newtheorem{thm}{Theorem}[section]
\newtheorem{lemma}[thm]{Lemma}
\newtheorem{cor}[thm]{Corollary}
\theoremstyle{definition}
\theoremstyle{remark}
\newtheorem{rem}{Remark}
\newtheorem*{exam}{Example}
\numberwithin{equation}{section}
\newcommand{\R}{\mathbf R}
\newcommand{\G}{\mathrm G}
\title[Holomorphic isometric embeddings]
{Holomorphic isometric embeddings of the complex two-plane Grassmannian into quadrics}
\author{Oscar MACIA, Yasuyuki NAGATOMO}
\address{Department of Mathematics, UNIVERSITY OF VALENCIA, 
C. Dr. Moliner, S/N, (46100) Burjassot, SPAIN} 
\email{oscar.macia@uv.es}
\address{Department of Mathematics, MEIJI UNIVERSITY, 
Higashi-Mita, Tama-ku, Kawasaki-shi, Kanagawa 214-8571, JAPAN} 
\email{yasunaga@meiji.ac.jp}
\keywords{Moduli spaces,  Holomorphic isomorphic embeddings, Grassmannian, complex quadric, vector bundles}
\subjclass[2010]{Primary 32H02; Secondly 53C07}
\begin{document}

\begin{abstract}
The present article studies holomorphic isometric embeddings
of the complex two--plane Grassmannnian into quadrics. 
We discuss the moduli space of these embeddings up to 
gauge and image equivalence using a generalisation of do Carmo--Wallach theory.
\end{abstract}
\maketitle

\section*{Introduction}

The description of the moduli of holomorphic isometric embeddings of K\"ahler manifolds into quadrics  can be  dealt with using gauge theory \cite{Na-13,Na-15}. The des-cription of the moduli in this sense is known, for instance, in the cases where the embedded manifold is $\mathbf C P^1$ \cite{MNT, MacNag} and, more recently in the general case of $\mathbf C  P^m$ \cite{Na-21}.  
The next  desideratum  is a study of embeddings of  general Grassmannians into quadrics. The aim of this article is to report the results of such an investigation.
In the present piece we examine the question for a restricted family of Grassmannians: the complex Grassmann manifolds 
${\rm Gr}_m (\mathbf C^{m+2})$ of all two--codimensional linear subspaces in $\mathbf C^{m+2}.$ This Hermitian
 symmetric space has a noteworthy geometrical structure as the
unique compact, K\"ahler, quaternionic K\"ahler manifold with positive scalar curvature \cite{Besse, Salamon, Wolf}.
The embedding of  arbitrary complex Grassmannians into quadrics will be analysed in a forthcoming paper \cite{next}.

 Before our goal can be achieved, and according to the general theory in \cite{Na-13},  it is necessary to investigate the representation theoretic description of certain modules associated to the relevant vector bundles appearing in the construction. In this sense, 
 the  central technical result proved in the paper is the determination of the decomposition of the space ${\rm GS}(\mathfrak mV_0,V_0)$ --to be defined in \S 1.5-- into irreducible representations in \S 3. From the plethysm  viewpoint, though Young tableaux and Littlewood--Richardson rules allow to systematically determine the decomposition of tensor products of  ${\rm SU}(n)$ representations \cite{Ful}, Littlewood did not laid  a straightforward method to determine which terms of the decomposition belong to the symmetric or skewsymmetric parts
 \cite{Littlewood}.  Nevertheless, we are able to overcome this
 difficulty by explicitly working out the action of the centre of the isotropy subgroup on the highest and lowest--weight vectors. 
The climax is achieved at the end of \S 3 where this result is used to describe the moduli up to gauge equivalence $\mathcal M_k.$
The special case of the complex two-plane Grassmannian in $\mathbf C^4,$ aka (complexified, compactified)  Minkowski space, being of particular interest, is worked out as an example.

For convenience of the reader, \S 1 contains a summary of the basic definitions and theorems
of the gauge theoretical approach to the study of moduli spaces of maps, that is, the generalisation of do Carmo--Wallach theory. This article being a sequel of sorts to \cite{MNT,MacNag} and \cite{Na-21}, this outline has been reduced to the bare minimum. 
It is nonetheless sufficient to understand the remaining sections of the paper without constant resource to these antecedents, nor to \cite{Na-13,Na-15} where  detailed proofs could be found if desired. 

Standard arguments in \S 2, where we prove the adequacy of the complex two-plane Grassmannian to the hypothesis of the generalisation of do Carmo--Wallach theory, have been improved 
and simplified  using 
Kodaira embeddings by
line bundles (\cite{ Na-15}, Theorem 6.1).  

\indent\paragraph{\bf Acknowledgements}
The work of O.M. is partially supported by  the AEI (Spain) and FEDER project  PID2019-105019GB-C21, and by the  GVA project AICO 2021 21/378.01/1.
The work of Y.N. is supported by JSPS KAKENHI Grant Number 26400074.

\section{Preliminaries}

\subsection{Evaluations and natural identifications.} 
Every manifold in this article is assummed to be connected.  The space of sections of the vector bundle $V\to M,$  will be denoted by  $\Gamma(V)$ and it determines  the   {\it evaluation map} $ev:M\times \Gamma(V)\to V : ev(x,t)=t(x).$ We do not make distinction between $ev$ and its restriction to   $M\times W$  where  $W$ is a subspace of $ \Gamma(V).$
Let the triple $(V\to M,h_V,\nabla^V)$ or $(V,h_V,\nabla^V)$ denote a vector bundle equipped with a fibre metric and a compatible connection. Two of these triples are said to be {\it isomorphic} if there exists a bundle map  between them preserving the fibre metrics and connections. Such a bundle map is called  a {\it bundle isomorphism}.
When considering holomorphic vector bundles over K\"ahler manifolds the previous description given by the triple $(V,h,\nabla)$
represents the Hermitian bundle $(V,h)$ equipped with the unique 
compatible Hermitian connection. In this context, the notion of
bundle isomorphism between triples $(V,h,\nabla)$ is equivalent to  {\it holomorphic
isomorphism} between Hermitian bundles.
The trivial vector bundle $M\times W\to M$ will be denoted by $\underline W$ or $\underline W\to M.$

Let $W$ be a real $N$-dimensional vector space
and ${\rm Gr}_p(W)$ the real Grassmannian of oriented $p$-planes in $W.$ Let $S\to {\rm Gr}_pW$  denote the \emph{tautological bundle.}
Then the \emph{universal quotient bundle} 
 $Q \to {\rm Gr}_p(W)$ 
is defined by the exactness of the sequence $0\to S \to \underline W\to Q\to 0$ of vector bundles  over ${\rm Gr}_p(W).$ 
The natural projection $\underline W \to Q$ allows to regard $W$ 
as a subspace of $\Gamma(Q)$ and is thus identified with the evaluation map.  Fixing an inner product on $W$ has the effect of inducing fibre--metrics and canonical connections on 
$S,\;Q\to {\rm Gr}_p(W).$

Let $f:M \to {\rm Gr}_p(W)$ be a map of a manifold $M$ into a Grassmannian, and denote by $f^{\ast}Q \to M$ the pull-back  of the universal quotient bundle. Then, $f$ is said to be \emph{full}
if the induced linear map 
$W\to \Gamma(f^{\ast}Q)$ is a monomorphism.

A real oriented vector bundle $V \to M$ of rank $q$ is {\it globally generated} by the $N$-dimensional subspace of sections $W\subset \Gamma(V)$ if the evaluation
map is surjective.
Moreover, if $W$ has an orientation, this defines a map $f:M\to {\rm Gr}_p(W),$ 
\[ 
f(x)= \text{Ker}\,ev_x=\left\{t\in W \,\vert \, t(x)=0 \right\},\qquad \dim \text{Ker}\,ev_x=p=N-q\]
where ${\rm Gr}_p(W)$ is a real oriented Grassmannian and the orientation
of $\text{Ker} \; ev_x$ is inherited from those of $V_x$ and $W.$ 
The map $f$ is said to be  {\it induced} by 
the pair $(V\to M,\;W).$ 
 The induced map determines a 
\emph{natural identification} of  $V\to M$  with $f^*Q \to M$ (cf.\ \cite{Na-13}),  and thus every smooth map $f:M\rightarrow {\rm Gr}_p(W)$ 
can be regarded as the induced map defined by  $(f^*Q\to M,\;W).$

If $W$ has an inner product, 
the adjoint of the evaluation map, $ev^{\ast}: V\to \underline{W},$ determines 
a natural identification  $V  \cong f^{\ast}Q$, as vector bundles over $M,$
when the image of $ev^{\ast}$ is restricted to $f^{\ast}Q \to M$. 
Therefore, $ev^{\ast}:V \to f^{\ast}Q$ is also termed a natural identification.

\subsection{image and gauge equivalence.}
Two mappings $f_1,f_2:M\to {\rm Gr}_p(W)$ are {\it image equivalent,} or {\it congruent},  
if there is an isometry $\phi$ of ${\rm Gr}_p(W)$ such that $f_2=\phi\circ f_1$. 
Each isometry $\phi$ of ${\rm Gr}_p(W)$ defines 
a bundle automorphism $\tilde \phi$ of $Q\to {\rm Gr}_p(W)$ covering the isometry $\phi$.  Besides,  if the  maps $f_1,f_2$ are 
 induced by $(V, W),$ 
they are {\it gauge equivalent}   
if they are image equivalent under $\phi$ and, additionally,  $ev_2^{\ast}=\tilde \phi\circ ev_1^{\ast}$, 
where $ev_i^{\ast}:V\to f_i^{\ast}Q$ $(i=1,2)$ are natural identifications.
Under these circumstances, the  connections on $V \to M$ induced by $ev_i^{\ast}$ 
are in the same orbit    
under gauge transformations. 
In \S 3 this will make possible to  identify the moduli space up to image equivalence $\mathbf M_k$ as a 
 quotient of the moduli space up to gauge equivalence $\mathcal M_k$  by the centraliser  
 of the holonomy subgroup of the structure group of $f^*Q.$

\subsection{Vector bundles over Hermitiann symmetric spaces.}
The complex two--plane Grassmannian is a compact Hermitian symmetric space associated to the Hermitian symmetric pair of compact type $(\mathrm G,\mathrm K)=({\rm SU}(N+2), \mathrm  S(\mathrm U(N)\times \mathrm U(2))$ or, in the diagrammatic notation of \cite{BastonEastwood},  
\[
\dynkin[%
%labels={\alpha_1,\alpha_2,\alpha_{N-1},\alpha_{N},\alpha_{N+1}},
%label directions={,,,,,,,,},
scale=1.8] A{**...*X*}.
%\end{tikz-picture}
\]

In general terms,
if we regard $\mathrm G\to {\rm G/K}$ as a principal $\mathrm K$-bundle,  the canonical connection is  such that 
the horizontal subspace 
is given by the left translation of $\mathfrak m,$ where $\mathfrak m$ is defined by the  standard decomposition of Lie algebras 
$\mathfrak{g}=\mathfrak{k}\oplus \mathfrak{m}$, 
 $\mathfrak{g}$ and $\mathfrak{k}$ being the Lie algebras of $\mathrm G$ and $\mathrm K.$ 
%%%%%%%%%%%%%%%%%%%%%%%%%%%%%%%%%%%%%%%%%%%%%%%%%%%%%%

Let $L\to \mathrm G/ \mathrm K$ be an associated, 
complex line bundle $ L=\mathrm G\times_{\mathrm K} V_0$,  
where $V_0$ is a complex $1$-dimensional 
$\mathrm K$-module.
The complex line bundle $L\to {\rm G/K}$ is actually holomorphic, by effect of  the canonical connection.
Moreover, there is  an invariant Hermitian metric $h$ on $L$  (indeed turning $L$ into an Einstein--Hermitian bundle), unique up to a positive constant multiple,  
for which the canonical connection is the Hermitian connection.
(cf,  \cite[p.121 Proposition 6.2]{Kob}). 

\subsection{Standard maps} If the space $W=H^0({\rm G/K},L)$ of holomorphic sections of $L \to {\rm G/K}$ is nontrivial, 
then, by the Bott--Borel--Weil Theorem,  it is an irreducible complex $\mathrm G$-module 
and globally generates  $L.$ 
Moreover, $W$ inherits a $\mathrm G$--invariant $L^2$--Hermitian inner product, from the Riemannian structure on ${\rm G/K}$ and the Hermitian structure of $(L,h).$ 
 The celebrated Kodaira  embedding of ${\rm G/K}$  is then the map induced by $(L\to {\rm G/K},\;W).$    The importance of the existence of a $\rm G$--invariant Hermitian inner product on $W$ in our theory can not be stressed enough, since it 
  allows us to obtain the pull--back connection. In fact, the pull--back metric induces a Hermitian--Yang--Mills connection.

The {\it real} vector bundle obtained from $L\to{\rm G/K}$ by restriction of the field of scalars to $\mathbf R$ is equipped with a fibre metric ${\rm Re}(h)$ and an
orientation induced by the complex structure. Furthermore,
 regard $W$ as a {\it real, oriented} vector space  with a $\rm G$--invariant $L^2$ inner product $(\cdot,\cdot)_{W}.$ The new induced  map into a real oriented Grassmannian is said to be {\it  standard}. 
 In this case, we  obtain a totally geodesic holomorphic embedding 
 of complex projective space into a real oriented Grassmannian,
and
the standard map is the composition of this last map with the Kodaira emedding.
Thus, the induced connection is also the Hermitian--Yang--Mills connection. 

\subsection{do Carmo--Wallach Theory}
Let $W$ be an orthogonal $\mathrm G$--module with respect to the inner product $(\cdot,\cdot)_W$. Regard $W$ as
a $\mathrm K$--module  where $\mathrm K$ is a subgroup of
$\mathrm G$ and let $V_0$ be a complex $\mathrm K$--submodule of $W,$  
following  \cite[Lemma 5.17]{Na-13}. The orthogonal splitting $W=U_0\oplus V_0$  allows the definition of a $\rm G$--equivariant standard map $f_0:M\to {\rm Gr}_p(W),$ with $p=\dim U_0,$ such that
$ f_0([g])=gU_0 \subset W,$ for all $ [g]\in {\rm G/K},\;g \in \mathrm G.$

Let $\mathrm{S}(W)$ denote  
the set of symmetric  endomorphisms of $W,$ 
equipped with the $\rm G$--invariant inner product
$(A,B)_{_S}=\text{trace}\,AB$, for $A,B \in \mathrm{S}(W).$
 Given $U,V$ subspaces of $W,$ define the real subspace
 $\mathrm S(U,V)\subset \mathrm S(W)$ spanned by 
 \[\mathrm S(u,v):=\frac{1}{2}\left\{u\otimes (\cdot, v)_{_W} + v\otimes (\cdot, u)_{_W} \right\},\]
where $u\in U,\;v\in V.$ 
Analogously,  $\G\mathrm{S}(U,V)\subset \mathrm{S}(W)$  is spanned by 
$g\mathrm{S}(u,v),$ with $g\in\mathrm{G}.$ 
Then, the  generalization
 of the do Carmo--Wallach Theorem  for holomorphic maps 
 of ${\rm G/K}$  into a quadric reads:
\begin{thm} \label{GenDW}

Let 
$f:{\rm G/K} \to {\rm Gr}_n(\mathbf R^{n+2})$ be a full holomorphic map
satisfying the {\it gauge condition for} $(L,h),$ that is,
the pull-back $f^{\ast}Q \to {\rm G/}K$ of the universal quotient bundle $Q\to {\rm Gr}_n(\mathbf R^{n+2})$ 
with the pull-back metric is holomorphically isomorphic to $(L,h)$. 
Let $W$ denote $H^0({\rm G/K},L)$ regarded as a real vector space equipped with a complex structure.
Then, there is a 
positive semi-definite symmetric endomorphism 
$T\in \text{\rm S}\,(W)$ 
such that the pair $(W,T)$ satisfies the following three conditions:\\
\begin{enumerate}[(a)]
\item The vector space $\mathbf{R}^{n+2}$ is a subspace of $W$ with the inclusion 
$\iota:\mathbf{R}^{n+2} \to W$ preserving the inner products, and 
$L\to M$ is globally generated by $\mathbf R^{n+2}$.
\item As a subspace, $\mathbf R^{n+2}=\text{\rm Ker}\,T^{\bot}$ and 
the restriction of $T$  is a positive symmetric endomorphism of $\mathbf R^{n+2}$. 
\item The endomorphism $T$ satisfies 
%%%%%%%%%%%%%%%%%%%%%%%%%%% DW 2 %%%%%%%%%%%%%%%%%%%%%%%%%%%%%%%%%
\begin{equation*}\label{HDW 2}
\left(T^2- I\!d_W, \G\mathrm{S}(V_0, V_0)\right)_{S}=0, 
\qquad
\left(T^2, \G\mathrm{S}(\mathfrak m V_0, V_0)\right)_{S}=0.
\end{equation*}
\end{enumerate}
\bigskip

If $\iota^{\ast}:W \to \mathbf R^{n+2}$ denotes the adjoint linear map of 
$\iota:\mathbf R^{n+2} \to W$, 
then $f:{\rm G/K} \to {\rm Gr}_n(\mathbf R^{n+2})$ is realized as 
 \begin{equation}\label{HDW5} 
f\left([g]\right)=\left(\iota^{\ast}T\iota \right)^{-1}\left(f_0\left([g]\right)\cap \text{\rm Ker}\,T^{\bot}\right), 
\end{equation}
where the orientation of 
$\left(\iota^{\ast}T\iota \right)^{-1}\left(f_0\left([g]\right)\cap \text{\rm Ker}\,T^{\bot}\right)_{[g]}$ 
is given by the orientation of $L_{[g]}$ and $\mathbf R^{n+2}$. 
Moreover, if the orientation of $\text{Ker}\,T$ is fixed, then 
we have a unique holomorphic totally geodesic embedding of 
${\rm Gr}_n(\mathbf R^{n+2})$ into ${\rm Gr}_{n^{\prime}}(W)$ 
by $\iota\left(\mathbf R^{n+2}\right)=\text{\rm Ker}\,T^{\bot}$, 
where $n^{\prime}=n+\text{\rm dim}\,\text{\rm Ker}\,T$
and a bundle isomorphism   
$\left(ev \circ \iota\circ\left(\iota^{\ast} T \iota\right)\right)^{\ast}
:L\to f^{\ast}Q$ as the natural identification by $f$.  
Such two maps $f_i$, $(i=1,2)$ are gauge equivalent if and only if 
$
\iota^{\ast}T_1\iota=\iota^{\ast}T_2\iota, 
$
where $T_i$ and $\iota$ correspond to $f_i$ in \eqref{HDW5}, respectively.\\ 

Conversely, 
suppose that a vector space $\mathbf R^{n+2}$ with an inner product and an orientation, and 
a positive semi-definite symmetric endomorphism 
$T\in \text{\rm End}\,(W)$ 
satisfying 
conditions (a),(b),(c) are given.  
Then we have a unique holomorphic totally geodesic embedding of 
${\rm Gr}_n(\mathbf R^{n+2})$ into ${\rm Gr}_{n^{\prime}}(W)$ after fixing the orientation of 
$\text{Ker}\,T$  
and 
the map $f:G/K \to {\rm Gr}_{p}(\mathbf R^{n+2})$ defined by \eqref{HDW5} 
is a full holomorphic map into ${\rm Gr}_n(\mathbf R^{n+2})$ 
satisfying the gauge condition with bundle isomorphism $L\cong f^{\ast}Q$ 
as the natural identification.

\end{thm}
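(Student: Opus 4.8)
\emph{Overview.} The plan is to specialise the gauge--theoretic do Carmo--Wallach theory of \cite{Na-13,Na-15} to holomorphic maps of ${\rm G/K}$ into a quadric, by passing back and forth between the geometric data of a map $f$ and the linear--algebraic datum of the endomorphism $T$; the bridge in both directions will be the standard map $f_0$ together with the zeroth-- and first--order jet data $V_0$ and $\mathfrak m V_0\subset W$ of the line bundle $L$.

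\emph{Recovering $T$ from $f$.} Starting from a full holomorphic $f$ satisfying the gauge condition, I will pull back the tautological sequence $0\to S\to\underline{\mathbf R^{n+2}}\to Q\to 0$ to exhibit $f^{\ast}Q$ as globally generated by $\mathbf R^{n+2}$, and use the gauge condition to obtain a holomorphic Hermitian isomorphism $f^{\ast}Q\cong(L,h)$. Composing the evaluation with this isomorphism realises $\mathbf R^{n+2}$ inside the holomorphic sections of $L$, hence inside $W=H^0({\rm G/K},L)$ by Bott--Borel--Weil, and fullness makes the inclusion injective. The quadric inner product on $\mathbf R^{n+2}$ and the restriction of $(\cdot,\cdot)_W$ differ by a positive symmetric Gram operator; replacing the inclusion by its composite with the inverse square root of this operator yields an isometric monomorphism $\iota:\mathbf R^{n+2}\hookrightarrow W$, which is (a). I then define $T\in\mathrm{S}(W)$ to be the positive square root of that Gram operator on $\mathbf R^{n+2}=\iota(\mathbf R^{n+2})$, extended by zero on the orthogonal complement, so that (b) holds by construction; unwinding the description of $f$ as the map induced by $(f^{\ast}Q,\mathbf R^{n+2})$ under these identifications then yields the formula \eqref{HDW5}.

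\emph{Condition (c): the crux.} Both identities of (c) assert that $T^{2}-{\rm Id}_W$ is $(\cdot,\cdot)_S$--orthogonal to a piece of the first jet of $f_0$ at the base point, transported over the ${\rm G}$--orbit by invariance. I will differentiate along $f_0$ the identities relating $ev$, $ev^{\ast}$ and the Hermitian data of $L$: the zeroth--order part shows that the pull--back metric of the map \eqref{HDW5} agrees with $h$ precisely when $T^{2}-{\rm Id}_W\perp\G\mathrm{S}(V_0,V_0)$, which is the first equation; the first--order part, using that $L\to{\rm G/K}$ is holomorphic for the canonical connection so that the $(0,1)$--part of the pull--back connection is governed by the layer $\mathfrak m V_0\subset W$, shows that $f$ is holomorphic precisely when $T^{2}\perp\G\mathrm{S}(\mathfrak m V_0,V_0)$ — equivalently $T^{2}-{\rm Id}_W\perp\G\mathrm{S}(\mathfrak m V_0,V_0)$, since $({\rm Id}_W,\G\mathrm{S}(\mathfrak m V_0,V_0))_S=0$ because $\mathfrak m V_0\perp V_0$. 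That no higher--order conditions occur uses that the target is a quadric, so $f^{\ast}Q$ is a line bundle and $W$ is spanned by its zeroth-- and first--order jet data. This translation of holomorphicity into the second identity of (c) is the step I expect to be the main obstacle. The gauge--equivalence criterion then follows by inspection: an isometry $\phi$ of ${\rm Gr}_n(\mathbf R^{n+2})$, with covering bundle automorphism $\tilde\phi$ of $Q$, alters $(\iota,T)$ by post--composition with $\phi$ and the natural identification by $\tilde\phi$, and $\iota^{\ast}T\iota$ is exactly the quantity left invariant; hence $f_1$ and $f_2$ are gauge equivalent iff $\iota^{\ast}T_1\iota=\iota^{\ast}T_2\iota$.

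\emph{Producing $f$ from $T$.} Conversely, given $\mathbf R^{n+2}\subset W$ and $T$ satisfying (a),(b),(c), I will fix an orientation of $\text{Ker}\,T$; then $\iota(\mathbf R^{n+2})=\text{Ker}\,T^{\bot}$ determines the asserted totally geodesic holomorphic embedding ${\rm Gr}_n(\mathbf R^{n+2})\hookrightarrow{\rm Gr}_{n'}(W)$, and a dimension count shows that for each $[g]$ the right--hand side of \eqref{HDW5} is an oriented $n$--plane in $\mathbf R^{n+2}$, so $f$ is well defined and, $\iota$ being injective, full. Running the computations of the previous step backwards: (a) and (b) make $\mathbf R^{n+2}$ globally generate $L$ and identify the suitably restricted $ev^{\ast}$ with a bundle isomorphism $L\cong f^{\ast}Q$, the natural identification by $f$, while the first and second equations of (c) force, respectively, the pull--back metric to equal $h$ (the gauge condition) and the $(0,1)$--part of the pull--back connection to vanish (holomorphicity of $f$). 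This closes the equivalence.
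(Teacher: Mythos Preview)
The paper does not prove this theorem. Theorem~\ref{GenDW} is stated in the preliminaries (\S1.5) as a background result imported from \cite{Na-13,Na-15}; the introduction says explicitly that \S1 ``contains a summary of the basic definitions and theorems'' and that ``detailed proofs could be found'' in \cite{Na-13,Na-15}. There is therefore no proof in this paper to compare your proposal against.

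That said, your outline is broadly consistent with the do Carmo--Wallach framework the paper summarises: the construction of $T$ via the Gram operator of the two inner products on $\mathbf R^{n+2}$, the interpretation of the two equations in (c) as, respectively, the metric (gauge) condition and the holomorphicity condition expressed through $V_0$ and $\mathfrak m V_0$, and the converse by running the identifications backwards. One point you should tighten if you intend this as an actual proof rather than a sketch: the sentence ``That no higher--order conditions occur uses that the target is a quadric, so $f^{\ast}Q$ is a line bundle and $W$ is spanned by its zeroth-- and first--order jet data'' conflates two different things. The absence of further conditions in (c) is not because $W$ is spanned by $V_0\oplus\mathfrak m V_0$ (in general it is not), but because for a map into a Grassmannian the pull--back connection and metric are determined by the orthogonal projection $\underline W\to f^{\ast}Q$, and the gauge condition plus holomorphicity constrain precisely the $\mathrm S(V_0,V_0)$ and $\mathrm S(\mathfrak m V_0,V_0)$ components of $T^2$ after $\mathrm G$--translation; higher jets do not enter because no higher--order geometric condition (such as harmonicity of the second fundamental form) is being imposed. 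If you want a self--contained argument you will need to reproduce the relevant computations from \cite{Na-13}, in particular the derivation of the two orthogonality relations from the explicit form of the pull--back second fundamental form.
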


\section{Characterisation of Holomorphic isometric embeddings of ${\rm Gr}_m(\mathbf C^{m+2})$ into quadrics}

%%%%%%%%%%%%%%%%%%%%%%%%%%%%%%%%%%%%%%%%%%%%%%%%%%%%%%%%%%%%%%%%%%
The objective of this section is to determine if the complex two--plane Grassmannnian satisfies the hypothesis of  Theorem \ref{GenDW}, and how this special manifold fits with the general theory summarised in \S 1.

Note that, the real, oriented Grassmannian ${\rm Gr}_n(\mathbf R^{n+2}),$  can be regarded as a complex quadric hypersurface $\mathcal Q_n$ in $\mathbf C P^{n+1}$ (cf. [5, pp. 278--282]).
The universal quotient bundle over the quadric $\mathcal Q_n
$  has a holomorphic bundle structure 
induced by the canonical connection.
Notice that the curvature two-form $R_Q$ of the canonical connection 
on the universal quotient bundle $Q\to \mathcal Q_n$ is a constant multiple of the K\"ahler two-form
$\omega_{\mathcal Q}$ on $\mathcal Q_n$
\[R_Q=-{\sqrt{-1}}\omega_{\mathcal Q}.\]

In the case of the complex two--plane Grassmannian, the holomorphic line bundle $\mathcal O(1)\to {\rm Gr}_m\mathbf C^{m+2}$ is related to the universal quotient bundle $\tilde Q\to {\rm Gr}_m(\mathbf C^{m+2})$ by  $\mathcal O(1)\cong \wedge^2 \tilde Q.$ Therefore, the relation between the curvature two-forms $R_{\mathcal O(1)},\;R_{\tilde Q}$ of these bundles over the complex two-plane Grassmannian is ${\rm Tr}R_{\tilde Q}=R_{\mathcal O(1)},$ and,  again, the curvature is proportional to the K\"ahler form $\omega:$ $R_{\mathcal O(1)}=-\sqrt{-1} \omega.$

Let $f:\mathrm{Gr}_m(\mathbf C^{m+2}) \to \mathcal Q_n$ 
be a holomorphic embedding. 
Then $f$ is called  an {\it isometric embedding of degree $k$} if 
$f^{\ast}\omega_{\mathcal Q}=k \omega.$ Note that this implies that $k\in\mathbf N.$ 
By definition, 
$f: \mathrm{Gr}_m(\mathbf C^{m+2})\to\mathcal Q_n$ 
is a {\it holomorphic isometric embedding of degree $k$} iff
the pull-back of the canonical connection on the universal quotient bundle 
over $\mathcal Q_n$  
is  the canonical connection on $\mathcal O(k) \to  {\rm Gr}_m(\mathbf C^{m+2}),$ 
 namely, iff $f$ satisfies the gauge condition for $\left( \mathcal O(k), h_k \right),$ where $h_k$ denotes the standard Einstein--Hermitian metric on $\mathcal O(k)\to {\rm Gr}_m(\mathbf C^{m+2}).$
Since the canonical connection on $\left( \mathcal O(k), h_k \right)$ is  the Hermitian connection, 
 Theorem \ref{GenDW} allows us to determine the  moduli space $\mathcal M_k$ of holomorphic isometric embeddings of degree $k$ modulo  the gauge equivalence of maps.

Note that, by virtue of the identification $\mathcal Q_n={\rm Gr}_n(\mathbf R^{n+2}),$ switching simultaneously the orientation of $\mathbf R^{n+2}$ and of the subspace induces an {\it holomorphic} isometry $\tau$ of $\mathcal Q_n.$ In what follows, we do not distighuish maps $f:{\rm Gr}_m(\mathbf C^{m+2})\to \mathcal Q_n$ which differ by composition with $\tau.$

Let $(L,h)$ be a Hermitian holomorphic line bundle over a K\"ahler manifold  $M$.  Then, the complex structure $J_L$ of  $L\to M$
induces a complex structure  $J$ on $H^0(M,L)$   
such  that $J_L \circ ev=ev\circ J$, where 
$ev:\underline{H^0(M,L)} \to L$ is the evaluation map.  
If $W=\mathbf C^{l+1}\subset H^0(M,L)$
is a complex subspace of holomorphic sections,
it can also be regarded as 
 the real vector space $(\mathbf R^{n+2},J),$ where $n=2l,$ equipped with the complex structure $J.$ 
Let the inner product on $\mathbf R^{n+2}$ be defined  as the real part of the $L^2$--Hermitian inner product on $H^0(M,L).$
Consider the map $f:M\to \mathcal Q_n={\rm Gr}_n(\mathbf R^{n+2})$ induced by 
$(L\to M,\mathbf R^{n+2}).$ If $f$ 
is a holomorphic isometric immersion, 
because of $J_L\circ ev=ev\circ J$ it can also be regarded as a holomorphic isometric immersion into 
a totally geodesic $\mathbf CP^l\subset \mathcal Q_n.$ 

Denote by $\mathrm S(\mathbf R^{n+2})$ the space of
symmetric endomorphisms of $(\mathbf R^{n+2},J),$ and let
 $\mathrm H(\mathbf R^{n+2})\subset \mathrm S(\mathbf R^{n+2})$ be the
 subspace of Hermitian endomorphisms.
The  $\mathrm U(l+1)$--structure of $\mathbf R^{n+2}$
allows us to  define the orthogonal complement of $\mathrm H(\mathbf R^{n+2})\subset \mathrm S(\mathbf R^{n+2}).$
The orthogonal complementary  subspace $\mathrm H(\mathbf R^{n+2})^\perp=\left\{A\in \mathrm S(\mathbf R^{n+2})\,|\, JA=-AJ \right\} $ can be  identified with the $\mathrm U(l+1)$--module
$S^2\mathbf C^{l+1}.$   

It is well--known that the Kodaira embedding into a complex projective space
induced by $(\mathcal O(k)\to {\rm Gr}_m(\mathbf C^{m+2}),$ $\; H^0({\rm Gr}_m(\mathbf C^{m+2}),\mathcal O(k)))$  is rigid as a holomorphic isometric embedding: Calabi  \cite{Cal} proved that there is a unique class of congruent holomorphic isometric embeddings. 
When the bundle and space of sections % inducinng the standard map 
are regarded as being real and oriented, the Kodaira embedding can
also be  viewed  as a holomorphic isometric embedding into a quadric.
Then, the characterisation of the moduli space of full holomorphic isometric embeddings ${\rm Gr}_m(\mathbf C^{m+2})\to \mathcal Q_n$ follows from Theorem 6.10 of \cite{Na-15}, which we recall here :

\begin{thm}
[{\rm \cite[Theorem 6.10]{Na-15}}]
\label{quadmodcpx}
Let $(L,h)$ be a Hermitian holomorphic line bundle over a K\"ahler manifold 
$M$. 
Let 
$\mathcal M$ be the moduli space of 
full holomorphic isometric immersions of 
$M$ into a quadric ${\rm Gr}_n(\mathbf R^{n+2})$ 
with the gauge condition for $(L,h)$ modulo gauge equivalence relation of maps.   

Suppose that  $\mathbf R^{n+2}$ is a {\it complex}\, subspace of $H^0(M,L)$ and 
the inner product on $\mathbf R^{n+2}$ is compatible with the complex structure. 
If there exists $f \in \mathcal M$ such that the evaluation map 
$ev:\underline{\mathbf R^{n+2}}\to L$ by $f$ satisfies $J_L\circ ev=ev\circ J$,  
then $\mathcal M$ has the induced complex structure and 
is an open submanifold of a complex  
subspace of $\mathrm  H(\mathbf R^{n+2})^\perp$. 
\end{thm}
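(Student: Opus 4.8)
The plan is to feed the hypotheses into the do Carmo--Wallach machinery and then control, by means of the complex structure $J$, the linear data it produces.

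\emph{Step 1 (do Carmo--Wallach realisation).} By the do Carmo--Wallach realisation theorem for holomorphic isometric immersions into a quadric satisfying the gauge condition for $(L,h)$ (the non-equivariant analogue of Theorem~\ref{GenDW}, cf.\ \cite{Na-13,Na-15}), the moduli space $\mathcal M$ is in bijection with the set of symmetric endomorphisms $T$ of the fixed complex subspace $\mathbf R^{n+2}\subset H^{0}(M,L)$ which are positive definite and for which $T^{2}$ satisfies a fixed \emph{inhomogeneous linear} system --- the analogues of the two identities in condition (c), where $T^{2}-Id$ and $T^{2}$ must annihilate two subspaces spanned by operators $\mathrm S(u,v)$ constructed from the image of $ev$ and its covariant derivatives --- together with the open conditions (a), (b) (positivity and global generation of $L$ by $\mathbf R^{n+2}$). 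Hence $\mathcal M$ is realised as an open subset of an affine subspace $\mathcal A\subset\mathrm S(\mathbf R^{n+2})$; write $\mathcal A_{0}$ for the linear subspace of which $\mathcal A$ is a translate. In particular $\mathcal M$ becomes a real-analytic manifold modelled on $\mathcal A_{0}$, and it remains to identify $\mathcal A_{0}$ with a complex subspace of $\mathrm H(\mathbf R^{n+2})^{\bot}$.

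\emph{Step 2 ($J$-invariance and the splitting).} First I would show that $\mathcal A_{0}$ is invariant under conjugation by $J$, hence splits orthogonally as $\mathcal A_{0}=\bigl(\mathcal A_{0}\cap\mathrm H(\mathbf R^{n+2})\bigr)\oplus\bigl(\mathcal A_{0}\cap\mathrm H(\mathbf R^{n+2})^{\bot}\bigr)$. This rests on the identity $J\,\mathrm S(u,v)\,J^{-1}=\mathrm S(Ju,Jv)$ together with the fact that every subspace generating the linear constraints is a $J$-invariant complex subspace of $H^{0}(M,L)$: here the hypotheses enter, since $\mathbf R^{n+2}\subset H^{0}(M,L)$ being complex with compatible inner product, and $J_{L}\circ ev=ev\circ J$ for the distinguished map $f_{*}\in\mathcal M$ furnished by the hypothesis, force the (deformed) evaluation map and its covariant derivatives --- hence all the generating vectors --- to be $J$-linear.

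\emph{Step 3 (vanishing of the Hermitian part and $\mathbf C$-linearity).} The heart of the matter is to prove that $\mathcal A_{0}\cap\mathrm H(\mathbf R^{n+2})=0$, so that $\mathcal A_{0}\subset\mathrm H(\mathbf R^{n+2})^{\bot}$, and that $\mathcal A_{0}$ is complex-linear for the complex structure under which $\mathrm H(\mathbf R^{n+2})^{\bot}\cong S^{2}\mathbf C^{l+1}$. Conceptually this expresses the fact that the second fundamental form of a holomorphic map of K\"ahler manifolds has no $(1,1)$-component: $\mathrm H(\mathbf R^{n+2})$ is the $(1,1)$-piece, $\mathrm H(\mathbf R^{n+2})^{\bot}\cong S^{2}\mathbf C^{l+1}$ the $(2,0)+(0,2)$-piece, and a deformation of $f$ through holomorphic isometric immersions with the gauge condition must lie in the latter. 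Concretely, for $C\in\mathrm H(\mathbf R^{n+2})$ self-adjointness gives $\langle Cw,w\rangle\in\mathbf R$ for the $L^{2}$-Hermitian form, so the pairings of $C$ against the ``type $(1,1)$'' generators vanish identically while the pairings against the ``type $(2,0)$'' generators reduce to $\langle Cw,w\rangle=0$ along a system of vectors $w$ whose non-degeneracy --- a form of Calabi's rigidity \cite{Cal} for holomorphic isometric immersions into $\mathbf C P^{l}$ --- forces $C=0$; and the surviving constraints on $\mathrm H(\mathbf R^{n+2})^{\bot}$, re-expressed through the Hermitian form, are manifestly $\mathbf C$-linear. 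Granting this, and noting $T_{f_{*}}^{2}\in\mathrm H(\mathbf R^{n+2})$ since $f_{*}$ is $J$-compatible, one has $\mathcal A=T_{f_{*}}^{2}+\mathcal A_{0}$ with $\mathcal A_{0}\subset\mathrm H(\mathbf R^{n+2})^{\bot}$, and $f\mapsto T_{f}^{2}-T_{f_{*}}^{2}$ identifies $\mathcal M$ with an open subset of the complex subspace $\mathcal A_{0}$ of $\mathrm H(\mathbf R^{n+2})^{\bot}$, carrying the induced complex structure.

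I expect Step 3 to be the main obstacle: Steps 1 and 2 are essentially formal once the do Carmo--Wallach description is available, but showing that the constraints eliminate the \emph{entire} Hermitian part --- not merely split off from it --- and that what is left is a genuine complex subspace of $S^{2}\mathbf C^{l+1}$ requires the precise interplay between $J$, the $L^{2}$-Hermitian form and the operators $\mathrm S(u,v)$, and is exactly where holomorphicity (equivalently, rigidity of the totally geodesic $\mathbf C P^{l}\subset\mathcal Q_{n}$) is indispensable.
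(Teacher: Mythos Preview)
The paper does not contain a proof of this statement: Theorem~\ref{quadmodcpx} is quoted verbatim from \cite[Theorem~6.10]{Na-15} and is invoked in \S2 and \S3 as an input, not re-proved. The phrase ``which we recall here'' preceding the theorem, together with the bracketed citation in its heading, makes this explicit. Consequently there is no proof in the present paper against which your proposal can be compared.

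That said, your sketch is a plausible outline of how one would expect the argument in \cite{Na-15} to run. The parametrisation of $\mathcal M$ by positive semi-definite $T$ with $T^{2}$ constrained to an affine subspace of $\mathrm S(\mathbf R^{n+2})$ is exactly the content of the generalised do Carmo--Wallach theory, and the decomposition $\mathrm S(\mathbf R^{n+2})=\mathrm H(\mathbf R^{n+2})\oplus\mathrm H(\mathbf R^{n+2})^{\bot}$ is precisely the one the paper sets up in the paragraph before the theorem. Your identification of Step~3 as the crux is correct: the statement that $\mathcal A_{0}\cap\mathrm H(\mathbf R^{n+2})=0$ is where the hypothesis $J_{L}\circ ev=ev\circ J$ does real work, and is where the rigidity of the totally geodesic $\mathbf CP^{l}\subset\mathcal Q_{n}$ enters. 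However, your proposal remains a sketch rather than a proof at that step --- the appeal to Calabi rigidity and to ``non-degeneracy of a system of vectors $w$'' is not made precise, and you would need to verify that the constraint subspaces $\mathrm{GS}(V_0,V_0)$ and $\mathrm{GS}(\mathfrak m V_0,V_0)$ (or their non-equivariant analogues) really do exhaust $\mathrm H(\mathbf R^{n+2})$ modulo the identity. To see how this is actually carried out you should consult \cite{Na-15} directly.
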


Therefore, the moduli up to gauge equivalence of maps, of full holomorphic isometric embeddings 
${\rm Gr}_m(\mathbf C^{m+2})\to \mathcal Q_n$  satisfying the
 the gauge condition for 
$(\mathcal O(k) \to \mathbf CP^m,h_k)$ 
 is an open submanifold of a complex  
subspace of $\mathrm  H(\mathbf R^{n+2})^\perp$, 
where $\mathbf R^{n+2}=H^0\left({\rm Gr}_m(\mathbf C^{m+2}), \mathcal O(k)\right)$.

%%%%%%%%%%%%%%%%%%%%%%%%%%%%%%%%%
%%%%%%%%%%%%%%%%%%%%%%%%%%%%%%%%%
\section{Moduli spaces}

The gauge theoretic generalization of the do Carmo--Wallach theory requires a thorough understanding of the  symmetric endomorphisms of the space $W=H^0({\rm G/K},L)$  of holomorphic sections of certain line bundles over ${\rm G/K}.$

Consider the symmetric pair 
$({\rm G,K})=\left(\text{SU}(m+2), \text{S}\left(\text{U}(m)\times \text{U}(2)\right)\right)$, 
where an element in the isotropy subgroup 
${\rm K}=\text{S}\left(\text{U}(m)\times \text{U}(2)\right)$ is of the form:
\[
\begin{pmatrix}
A & O \\
O & B
\end{pmatrix}, \quad A \in \text{U}(m), \,\,B \in \text{U}(2),\qquad |A||B|=1.
\] This corresponds to the description of the complex two--plane Grassmannian as a Hermitian symmetric space ${\rm Gr}_m(\mathbf C^{m+2})=\frac{{\rm SU}(m+1)}{{\rm S}({\rm U}(m)\times {\rm U}(2))}.$

Several $\mathrm U(n)$ representations will be used in this section. Let us fix a maximal torus inside $\mathrm U(n)$ defined by the set of diagonal matrices, which will be denoted by ${\rm diag}(x_1,\dots,x_n),$ and denote by $V_n(\lambda_1,\lambda_2, \cdots,\lambda_n)$  
an irreducible complex representation of 
$\text{U}(n)$ with the highest weight 
$\lambda=(\lambda_1,\lambda_2, \cdots,\lambda_n)$, 
where $\lambda_1 \geqq \lambda_2 \geqq \cdots \geqq \lambda_n$ are integers.  Then, the maximal torus acts on the highest--weight vector $\hat w\in V_n(\lambda)$  as
\[{\rm diag}(x_1,\dots,x_n)\hat w = x_1^{\lambda_1}\dots x_n^{\lambda_n}\hat w.\]
 We can also regard $V_n(\lambda)$ as a ${\rm SU}(n)$ representation, and in this case it will  satisfy $V_n(\lambda_1,\dots,\lambda_n)=V_n(\lambda_1+t,\dots,\lambda_n+t),\; t\in\mathbf Z,$ so by convention we will
choose $t=-\lambda_n$ so the last weight becomes zero.
The {\it basis of dominant integral weights} of ${\rm SU}(n)$ 
will be denoted by $\pi_i\; (1\leq i \leq n-1)$ and are defined by
$\lambda_1=\dots=\lambda_i=1,\; \lambda_{i+1}=\dots=\lambda_n=0.$ The ${\rm SU}(n)$ irreducible representation with highest weight $\pi=\sum_i k_i\pi_i$ will be denoted by $F_n(\pi).$ 
 In diagrammatic notation, the fundamental representations $F_n(\pi_i)$ are given by  labeled Dynkin diagrams
\[F_n(\pi_i)=%\begin{tikz-picture}
\dynkin[%
%labels={\alpha_1,\alpha_2,\alpha_{3},\alpha_{m},\alpha_{m+1}},
labels*={,,1,,},
labels={,, i^{th},},
%label directions={,,,,,,,,},
scale=1.8]A{**...**...*}.
%\end{tikz-picture}
\]

The relation between ${\rm SU}(n)$ representations $F_n(\pi)$ 
 and $V_n(\lambda)$ is
\[
F_n\left(\sum_{i=1}^{n-1} k_i\pi_i\right)
=V_n\left(\sum_{i=1}^{n-1} k_i, \sum_{i=2}^{n-1} k_i, \cdots, k_{m-2}+k_{n-1},k_{n-1},0\right). 
\]

The subindex $n$ in $F_n,\;V_n$ will be neglected when it is clear enough. Let $\pi=\sum_i k_i\pi_i$ a dominant integral weight, associated to the ${\rm SU}(m+2)$ representation $F(\pi).$ An element of the maximal torus of $\mathrm U(m+2)$ acts on the highest  weight vector $\hat w\in F(\pi)$ as 
${\rm diag}(x_1,\dots,x_{m+2})\hat w=x_1^{\lambda_1}\dots x_{m+1}^{\lambda_{m+1}}\hat w.$ Since the dual representation of $V(\lambda)$ has lowest weight equal to $-\lambda,$ the torus of ${\rm U}(m+2)$ acts on the lowest weight vector $\check w\in F(\pi)$ as it does on the highest weight vector of $F(-\pi ')$ with
\[
\pi'=-\sum_{i=1}^{m+1} k_i\pi_{m+2-i}=-\sum_{j=1}^{m+1} k_{m+2-j}\pi_j. 
\] 
Thus we obtain 
\begin{eqnarray*}
{\rm diag}(x_1,\dots ,x_{m+2}) \check w 
&=&
x_1^{-\sum_{j=1}^{m+1} k_{m+2-j}} 
x_2^{-\sum_{j=2}^{m+1} k_{m+2-j}}\cdots 
x_{m}^{-k_{2}-k_{1}} 
x_{m+1}^{-k_{1}} \check w\\
&=&
 x_1^{-\sum_{j=1}^{m+1} k_{j}} 
x_2^{-\sum_{j=1}^{m} k_{j}} \cdots  
x_{m}^{-k_{2}-k_{1}} 
x_{m+1}^{-k_{1}}\check w.
\end{eqnarray*}

We take an element of the center ${\rm U}(1)$ of ${\rm K}=\mathrm  S(\mathrm U(m)\times \mathrm U(2))={\rm SU}(m)\times {\rm SU}(2)\times {\rm U}(1)$: 
\[
Y=\text{diag}
(\;
\underbrace{y^{\frac{1}{m}},\dots,y^{\frac{1}{m}}}_{m}, 
y^{-\frac{1}{2}}, y^{-\frac{1}{2}}\; ). 
\] 
Consequently, $Y$ acts on $\check w$ as 
\begin{align*}
Y\cdot w
=&\underbrace{y^{-\frac{1}{m}\sum_{j=1}^{m+1} k_{j}}
y^{-\frac{1}{m}\sum_{j=1}^{m} k_{j}}\cdots 
y^{-\frac{1}{m}(k_{3}+k_{2}+k_{1})}
y^{-\frac{1}{m}(k_{2}+k_{1})}}_{m}
y^{\frac{1}{2}k_{1}}\check w\\
=&y^{-\frac{mk_1}{m}-\frac{mk_2}{m}
-\frac{(m-1)k_3}{m}-\cdots -\frac{2k_{m}}{m}-\frac{k_{m+1}}{m}+\frac{1}{2}k_1}\check w\\
=&y^{-\frac{1}{2}k_1-\frac{1}{m}\sum_{i=2}^m (m+2-i)k_i}\check w.
\end{align*}

Since $H^0(M;\mathcal O(k))=F(k \pi_2)$ %=E(k\pi_m)$ 
by the Bott--Borel--Weil Theorem \cite{Bott}, 
%\[
%\mathcal O(k)\to {\rm Gr}_m(\mathbf C^{m+2})=\mathcal O(
%\dynkin[
%labels*={,,k,},
%scale=1.8] A{*...*X*}
%)\]
the moduli space is contained in 
$S^2(F(k\pi_2))$.  This last space was denoted by $\mathrm  H(W)^\perp $ in \S 2.
Firstly, by  steady application of  Littlewood--Richardson rules on Young tableaux  with two rows and $k$ columns \cite{Ful},
  we have that 
\begin{lemma}
\begin{eqnarray*}
F(k\pi_2) 
\otimes F(k\pi_2) & = & 
\bigoplus_{i=0}^{k}\bigoplus_{j=i}^{k}V(2k-i,2k-j,j,i,0,\cdots, 0) \\
&=&\bigoplus_{i=0}^{k}\bigoplus_{j=0}^{k-i}V(2k-i,2k-i-j,i+j,i,0,\cdots, 0)\\
&=&
\bigoplus_{i=0}^{k}\bigoplus_{j=0}^{k-i}F(j\pi_1+(2k-2i-2j)\pi_2+j\pi_3+i\pi_4).
\end{eqnarray*}

Note that $k\geqq i+j$. 
\end{lemma}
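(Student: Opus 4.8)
The plan is to reduce the statement to a single Littlewood--Richardson computation and then match the outcome with each of the three displayed parametrisations by elementary reindexing. By the dictionary between the $F(\pi)$-- and $V(\lambda)$--notations recalled above, $F(k\pi_2)$ is the $\mathrm{SU}(m+2)$--module $V(k,k,0,\dots,0)$ attached to the rectangular Young diagram $\lambda=(k,k)$ with two rows and $k$ columns, so that
\[
F(k\pi_2)\otimes F(k\pi_2)=\bigoplus_{\mu}c^{\mu}_{\lambda\lambda}\,V(\mu),
\]
the sum being over partitions $\mu$ of $4k$ (with at most $m+2$ parts), and $c^{\mu}_{\lambda\lambda}$ being the number of Littlewood--Richardson skew tableaux of shape $\mu/\lambda$ and content $(k,k)$. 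Everything hinges on describing those tableaux.

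The main step is the combinatorial analysis. Since the content is $(k,k)$ only the symbols $1$ and $2$ appear; column--strictness then forces $\mu_3,\mu_4\le k$ and makes $\mu=(\mu_1,\mu_2,\mu_3,\mu_4)$ have at most four parts (column $1$ of the skew shape lies in rows $3,4,\dots$ and can carry at most two strictly increasing entries). One then verifies, in this order, that the skew boxes of row $1$ are all $1$'s (else the reverse reading word would begin with a $2$), that the skew boxes of row $2$ are all $2$'s (each lies strictly below a $1$ of row $1$), that row $3$ reads $1^{c_3}2^{d_3}$, and that the $\mu_4$ boxes of row $4$ are all $2$'s (each lies below a $1$ of row $3$). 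Counting the $k$ available $1$'s gives $c_3=2k-\mu_1$, so the filling is \emph{completely determined by} $\mu$ and hence $c^{\mu}_{\lambda\lambda}\in\{0,1\}$. Imposing the lattice (ballot) condition on the reverse reading word, together with $\mu\supseteq\lambda$ and column--strictness, one finds that $c^{\mu}_{\lambda\lambda}=1$ exactly when
\[
\mu_1\le 2k,\qquad \mu_2\ge k,\qquad \mu_1+\mu_3\ge 2k,\qquad \mu_2+\mu_3\le 2k,\qquad \mu_1+\mu_4\le 2k,
\]
and $c^{\mu}_{\lambda\lambda}=0$ otherwise.

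It then remains to rewrite this set of $\mu$'s. Putting $i=\mu_4$, $j=\mu_3$, the inequalities $\mu_1+\mu_4\le 2k$ and $\mu_2+\mu_3\le 2k$ together with $\mu_1+\mu_2+\mu_3+\mu_4=4k$ force $\mu_1=2k-i$ and $\mu_2=2k-j$, while the remaining inequalities and the partition condition reduce exactly to $0\le i\le j\le k$. This gives the first displayed line, with each summand of multiplicity one; the substitution $j\mapsto j-i$ turns it into the second; and applying $k_r=\mu_r-\mu_{r+1}$ to $(2k-i,\,2k-i-j,\,i+j,\,i,\,0,\dots,0)$ yields $k_1=k_3=j$, $k_2=2(k-i-j)$, $k_4=i$ and $k_r=0$ otherwise, which is the third line; the condition $k\ge i+j$ appearing there is just $k_2\ge 0$.

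I expect the only genuinely delicate point to be the ballot bookkeeping in the previous paragraph. A row is read from right to left, so that its $2$'s are met before its $1$'s; consequently the block of $d_3$ twos in row $3$ produces the binding inequality $d_3\le\mu_1-\mu_2$, which, after substituting $d_3=\mu_1+\mu_3-2k$, is precisely $\mu_2+\mu_3\le 2k$, whereas the blocks of twos in rows $2$ and $4$ contribute only the automatic inequality $\mu_2\le\mu_1$. Everything else---including the verification that the stated inequalities are also sufficient, i.e.\ that the prescribed rows really do assemble into a valid Littlewood--Richardson tableau---is routine.
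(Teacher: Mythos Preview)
Your argument is correct and follows exactly the route the paper indicates: the paper does not give a proof beyond the sentence ``by steady application of Littlewood--Richardson rules on Young tableaux with two rows and $k$ columns \cite{Ful}'', and your proposal is a careful execution of precisely that computation, including the reindexing between the three displayed parametrisations. In particular your derivation of the five inequalities on $\mu$, the observation that $\mu_1+\mu_4\le 2k$ and $\mu_2+\mu_3\le 2k$ must both be equalities because $|\mu|=4k$, and the translation $k_r=\mu_r-\mu_{r+1}$ are all sound; there is nothing to add.
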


\begin{lemma}
The lowest weight of $V(2k-i,2k-i-j,j,i,0,\cdots,0)$ is 
\[
-2k+\left(1+\frac{2}{m} \right)i+\left(\frac{1}{2}+\frac{1}{m} \right)j. 
\]
\end{lemma}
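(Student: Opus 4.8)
The plan is to read the weight data of the module off the preceding lemma and substitute it into the formula, obtained above, for the action of the central element $Y=\mathrm{diag}(y^{1/m},\dots ,y^{1/m},y^{-1/2},y^{-1/2})$ on the lowest--weight vector $\check w$. By the preceding lemma the module in question is $F\bigl(j\pi_1+(2k-2i-2j)\pi_2+j\pi_3+i\pi_4\bigr)$, that is, in the notation $\pi=\sum_\ell k_\ell\pi_\ell$ one has $k_1=j$, $k_2=2k-2i-2j$, $k_3=j$, $k_4=i$ and $k_\ell=0$ for $\ell\geq 5$. Substituting these values into the exponent $-\tfrac12 k_1-\tfrac1m\sum_{\ell\geq 2}(m+2-\ell)k_\ell$ and collecting the powers of $k$, $i$ and $j$ gives the asserted value. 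The computation is made transparent by the identity $m(2k-2i-2j)+(m-1)j+(m-2)i=m(2k-i-j)-(2i+j)$: dividing by $-m$ this contributes $-2k+i+j$ together with a fractional remainder $\tfrac1m(2i+j)$, and adding the term $-\tfrac12 k_1=-\tfrac12 j$ assembles everything into $-2k+\bigl(1+\tfrac2m\bigr)i+\bigl(\tfrac12+\tfrac1m\bigr)j$.

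An equivalent and perhaps quicker route works directly with the maximal torus of $\mathrm U(m+2)$. The $F$--notation above translates into the highest weight $\lambda=(\lambda_1,\dots ,\lambda_{m+2})$ with $\lambda_1=2k-i$, $\lambda_2=2k-i-j$, $\lambda_3=i+j$, $\lambda_4=i$ and $\lambda_\ell=0$ for $\ell\geq 5$; the lowest weight is then the coordinate reversal $(\lambda_{m+2},\dots ,\lambda_1)$, so $\check w$ carries a torus weight whose nonzero entries occupy only the last four slots. Contracting this reversed tuple against the exponent vector $\bigl(\tfrac1m,\dots ,\tfrac1m,-\tfrac12,-\tfrac12\bigr)$ of $Y$ yields the power
\[
\frac{1}{m}\bigl(\lambda_3+\lambda_4\bigr)-\frac{1}{2}\bigl(\lambda_1+\lambda_2\bigr)
=\frac{1}{m}(2i+j)-\frac{1}{2}(4k-2i-j),
\]
which is again $-2k+\bigl(1+\tfrac2m\bigr)i+\bigl(\tfrac12+\tfrac1m\bigr)j$.

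Neither route hides a genuine difficulty; each is a one--line calculation once the weight has been correctly identified. The single point meriting care is the bookkeeping of which fundamental weight carries which coefficient in the weighted sum $\sum_\ell(m+2-\ell)k_\ell$ (equivalently, of the reversal of coordinates in the second route). Finally, one should note that the power of $y$ is unaffected by a shift of the $\mathrm U(m+2)$--weight by a multiple of $(1,\dots ,1)$, because $\tfrac1m\cdot m-\tfrac12\cdot 2=0$; hence the computation does not depend on how $\lambda$ is normalised as an $\mathrm{SU}(m+2)$--weight, and the stated identity holds uniformly in $m$.
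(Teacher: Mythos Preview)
Your proposal is correct, and your first route is essentially identical to the paper's own proof: both substitute $k_1=j$, $k_2=2k-2i-2j$, $k_3=j$, $k_4=i$ into the previously derived exponent $-\tfrac12 k_1-\tfrac1m\sum_{\ell\ge 2}(m+2-\ell)k_\ell$ and simplify. Your algebraic regrouping via $m(2k-2i-2j)+(m-1)j+(m-2)i=m(2k-i-j)-(2i+j)$ is a slightly cleaner way to arrive at the same line the paper writes out term by term.

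Your second route---reversing the $\mathrm U(m+2)$ weight and contracting directly against the diagonal exponent vector of $Y$---is not in the paper but is a legitimate and somewhat more transparent alternative, since it bypasses the intermediate formula for $Y\cdot\check w$ entirely. The closing remark that the result is invariant under shifting $\lambda$ by a multiple of $(1,\dots,1)$ (because $\tfrac{m}{m}-\tfrac{2}{2}=0$) is a nice sanity check absent from the paper; it explains why the computation is insensitive to the $\mathrm{SU}$ versus $\mathrm U$ normalisation and, in particular, why the case $m=2$ (where $\pi_4$ is not available) causes no trouble.
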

\begin{proof}
If $\check w_i$ is the lowest  weight vector in 
$V(2k-i,2k-i-j,i+j,i,0,\cdots,0)$, 
then 
\begin{align*}
Y\cdot \check w_i=&y^{-\frac{1}{2}j-\frac{1}{m}\left\{m(2k-2i-2j)+(m-1)j+(m-2)i \right\}}\check w_i\\
=&y^{-2k+\left(2-\frac{m-2}{m} \right)i+\left(-\frac{1}{2}+2-\frac{m-1}{m} \right)j}\check w_i\\
=&y^{-2k+\left(1+\frac{2}{m} \right)i+\left(\frac{1}{2}+\frac{1}{m} \right)j}\check w_i.
\end{align*}
\end{proof}

Finally, we proceed to detail the 
moduli up to gauge equivalence of maps of holomorphic isometric embeddings of 
${\rm Gr}_m(\mathbf C^{m+2})\to \mathcal Q_p={\rm Gr}_p(W)$  
of degree $k.$  If $V_0=\mathbf C_{k}$ denotes 
the one--dimensional representation $V_{m}(k,k,\cdots,k)$ 
of $\text{U}(m)$, 
then the homogeneous bundle $\text{SU}(m+2)\times_{\mathrm  S(\mathrm U(m)\times \mathrm U(2))} \mathbf C_{-k}$ is 
the complex line bundle $\mathcal O(k) \to \mathbf {\rm Gr}_m(\mathbf C^{m+2})$ of degree $k$. 
Therefore, by the generalisation of do Carmo--Wallach, cf Theorem \ref{GenDW}, $W$ is identified with $H^0({\rm Gr}_m(\mathbf C^{m+2}),\mathcal O(k)).$ 
In order to achieve the description of the moduli Theorems \ref{GenDW} and  \ref{quadmodcpx}    require the specification 
of $\G\mathrm{S}(\mathfrak{m}V_0,V_0)\cap \mathrm  H(W)^\perp $.

\begin{lemma}\label{mV0}
\[\mathfrak m V_0 = \mathbf C^m\otimes \mathbf C^{2*}\otimes \mathbf C_{-k+\frac1m+\frac12}.\]
\end{lemma}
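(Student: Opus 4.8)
The plan is to identify the isotropy representation $\mathfrak m$ explicitly as a $\mathrm K$-module and then tensor with $V_0=\mathbf C_k$. Recall that for the Hermitian symmetric pair $(\mathrm G,\mathrm K)=(\mathrm{SU}(m+2),\mathrm S(\mathrm U(m)\times\mathrm U(2)))$ the complexified tangent space decomposes as $\mathfrak m^{\C}=\mathfrak m^{+}\oplus\mathfrak m^{-}$, where $\mathfrak m^{+}$ consists of the upper-right $m\times 2$ blocks and $\mathfrak m^{-}$ of the lower-left $2\times m$ blocks in $\mathfrak{su}(m+2)^{\C}=\mathfrak{sl}(m+2,\C)$. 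As a module for $\mathrm S(\mathrm U(m)\times\mathrm U(2))$, the holomorphic tangent piece $\mathfrak m^{+}$ is $\mathbf C^{m}\otimes\mathbf C^{2*}$ tensored with an appropriate power of the central character, since an $m\times 2$ matrix $Z$ transforms as $Z\mapsto AZB^{-1}$ under $\mathrm{diag}(A,B)$; the $\mathfrak m$ appearing in the statement is this holomorphic piece (the one relevant to the $\bar\partial$-operator and hence to $\G\mathrm S(\mathfrak m V_0,V_0)$).

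First I would pin down the central $\mathrm U(1)$-weight. Using the element $Y=\mathrm{diag}(y^{1/m},\dots,y^{1/m},y^{-1/2},y^{-1/2})$ already introduced, an entry $Z_{a b}$ of the upper-right block scales by $y^{1/m}\cdot(y^{-1/2})^{-1}=y^{1/m+1/2}$ under the action $Z\mapsto AZB^{-1}$. Hence $\mathfrak m^{+}\cong\mathbf C^{m}\otimes\mathbf C^{2*}\otimes\mathbf C_{\frac1m+\frac12}$ as an $\mathrm S(\mathrm U(m)\times\mathrm U(2))$-module, where the subscript records the exponent of $y$. Then tensoring with $V_0=\mathbf C_k=V_m(k,\dots,k)$, whose associated line bundle is $\mathcal O(k)$ and whose central weight contributes an additional $-k$ (matching the convention that $\mathcal O(k)=\mathrm{SU}(m+2)\times_{\mathrm K}\mathbf C_{-k}$, so that $V_0$ carries weight $-k$ in the $y$-grading fixed by the earlier computation), we obtain $\mathfrak m V_0=\mathbf C^{m}\otimes\mathbf C^{2*}\otimes\mathbf C_{-k+\frac1m+\frac12}$, which is the asserted formula.

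The main obstacle — really the only subtle point — is bookkeeping the normalisation of the central character so that it is consistent with the convention used earlier in \S3 for $V_0$ and for the action of $Y$ on lowest-weight vectors. One must check that "$-k$" is indeed the $y$-exponent attached to $V_0=\mathbf C_k$ under the same parametrisation of $\mathrm U(1)\subset\mathrm K$ that was used in the computation of $Y\cdot\check w$ above, and that the tangent-space weight $+\frac1m+\frac12$ is computed with the same $Y$; this is exactly the $1/m$ from the $\mathrm U(m)$-slot and the $1/2$ from the $\mathrm U(2)$-slot (appearing with a plus sign because $\mathbf C^{2*}$ is the \emph{dual} of the standard $\mathrm U(2)$-module, cancelling the $-1/2$). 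Everything else is the standard identification of the isotropy module of a Hermitian symmetric space, so once the grading conventions are aligned the statement follows immediately; I would present it as a short computation with the single displayed line for the $y$-exponent of a generic entry $Z_{ab}\otimes(\text{section of }\mathcal O(k))$.
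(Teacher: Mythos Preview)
Your proposal is correct and essentially parallel to the paper's own argument: both identify the relevant summand of $\mathfrak m^{\mathbf C}$ as the $\mathrm K$-module $\mathbf C^{m}\otimes(\mathbf C^{2})^{\ast}\otimes\mathbf C_{\frac{1}{m}+\frac{1}{2}}$ and then tensor with $V_0=\mathbf C_{-k}$. The only difference is presentational: the paper obtains this module geometrically, via the identification of the holomorphic (co)tangent bundle as $T=S^{\ast}\otimes Q$ with the tautological and quotient bundles each carrying its own central twist $\mathbf C_{1/m}$ and $\mathbf C_{-1/2}$, whereas you read the same module off directly from the adjoint action $Z\mapsto AZB^{-1}$ on the off-diagonal block and compute the $Y$-weight of a generic entry. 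Your cautionary remark about aligning the sign of the central weight of $V_0$ with the earlier $Y$-computation is exactly the bookkeeping the paper performs implicitly through those twists, so nothing further is needed.
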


\begin{proof}
Let $\mathbf C^{m}$ denote the standard representation of $\text{U}(m),$ that is, 
$\mathbf C^{m}=V_{m}(1,0,\cdots,0)$. 
The tautological vector bundle $S\to {\rm Gr}_{m}(\mathbf C^{m+2})$ can be identified with the homogeneous bundle ${\rm SU}(m+2)\times_{\mathrm  S(\mathrm U(m)\times \mathrm U(2))}\mathbf C^m.$ Together with the universal quotient bundle $Q\to {\rm Gr}_m\mathbf C^{m+2}$ defined by the exact
sequence $0\to S\to \underline{\mathbf C^{m+2}}\to Q\to 0,$ they define the holomorphic tangent bundle, $T\to {\rm Gr}_m(\mathbf C^{m+2}),\;$ $T=S^*\otimes Q,$ which is  a homogeneous bundle with standard fibre
\[ \left({\mathbf C^m} \otimes \mathbf C_{\frac{1}{m}}\right) ^{\ast} \otimes 
\left(\mathbf C^2 \otimes 
\mathbf C_{-\frac{1}{2}}\right)
={\mathbf C^m}^{\ast} \otimes 
\mathbf C^2 \otimes \mathbf C_{-\frac{1}{m}-\frac{1}{2}}, 
\]
and $T^*\to {\rm Gr}_m(\mathbf C^{m+2})$ where
\[T^{\ast}
={\mathbf C^m}\otimes 
{\mathbf C^2}^{\ast}\otimes \mathbf C_{\frac{1}{m}+\frac{1}{2}}.
\]

Since  $T ^c\cong \mathfrak m^c,$ and $V_0$ is spanned by
the lowest--weight vector 	of $H^0({\rm Gr}_m(\mathbf C^{m+2}),\mathcal O(k)),$ then
\[T^c\otimes V_0=
{\mathbf C^m}\otimes 
{\mathbf C^2}^{\ast}\otimes \mathbf C_{\frac{1}{m}+\frac{1}{2}}
\otimes \mathbf C_{-k}
={\mathbf C^m}\otimes 
{\mathbf C^2}^{\ast}\otimes \mathbf C_{-k+\frac{1}{m}+\frac{1}{2}}.
\]
\end{proof}

\begin{lemma}\label{intersection}
\[{\rm GS}(\mathfrak m V_0,V_0)\cap  \mathrm  H(W)^\perp=V(2k,2k,0,\dots,0).\]
\end{lemma}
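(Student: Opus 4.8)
The plan is to compute both sides as $\mathrm{U}(1)$-eigenspaces inside $S^2W$ and then match them. The space $\mathrm{GS}(\mathfrak m V_0,V_0)$ is by definition the $\mathrm{G}$-submodule of $\mathrm S(W)$ generated by $\mathrm S(\mathfrak m V_0,V_0)$; since $\mathfrak m V_0$ and $V_0$ are both subspaces of $W$, a Schur-type argument shows $\mathrm{GS}(\mathfrak m V_0,V_0)$ is the image in $S^2 W$ of the $\mathrm{G}$-span of the ``mixed'' vectors $u\otimes v + v\otimes u$ with $u\in\mathfrak m V_0$, $v\in V_0$. As a $\mathrm{G}$-module this image is a sum of certain irreducibles appearing in $S^2 W$, and by Lemma~\ref{mV0} the weight of a vector $u\odot v$ with $v$ the lowest-weight vector of $W=F(k\pi_2)$ (so $v\in V_0$) and $u$ in $\mathfrak m V_0$ can be read off: $v$ contributes $\mathrm{U}(1)$-weight $-k$ and, since $Y$ acts on $\mathfrak m V_0=\mathbf C^m\otimes\mathbf C^{2*}\otimes\mathbf C_{-k+\frac1m+\frac12}$ with the scalar $y^{-k+\frac1m+\frac12}$ on the $\mathbf C_{-k+\frac1m+\frac12}$ factor (the $\mathrm{SU}(m)\times\mathrm{SU}(2)$ parts being $Y$-invariant), the product $u\odot v$ has $\mathrm{U}(1)$-weight $-2k+\frac1m+\frac12$.

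Next I would intersect with $\mathrm H(W)^\perp\cong S^2\mathbf C^{l+1}$. The key point, already implicit in \S2 and in Lemma~3.2, is that the relevant decomposition parameter is exactly the $\mathrm{U}(1)$-weight: an irreducible summand $V(2k-i,2k-i-j,i+j,i,0,\dots,0)$ of $F(k\pi_2)^{\otimes 2}$ lies in $\mathrm H(W)^\perp$ (rather than in $\mathrm H(W)$) precisely according to the value of its lowest weight, which Lemma~3.2 computes to be $-2k+(1+\tfrac2m)i+(\tfrac12+\tfrac1m)j$. So the summands of $\mathrm{GS}(\mathfrak m V_0,V_0)\cap\mathrm H(W)^\perp$ are those $V(2k-i,2k-i-j,i+j,i,0,\dots,0)$ from Lemma~3.1 whose lowest weight equals the weight $-2k+\frac1m+\frac12$ of the generating mixed vector. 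Setting $-2k+(1+\tfrac2m)i+(\tfrac12+\tfrac1m)j=-2k+\tfrac1m+\tfrac12$ gives $(2m+4)i+(m+2)j=2+m$, i.e. $2(m+2)i+(m+2)j=m+2$, forcing $i=0$ and $j=1$. The unique surviving summand is therefore $V(2k,2k,0,\dots,0)$, as claimed.

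It remains to check two things rigorously: first, that $V(2k,2k,0,\dots,0)$ genuinely occurs in $\mathrm{GS}(\mathfrak m V_0,V_0)$ and not merely in $S^2W$ with the right weight — this needs an explicit highest/lowest-weight vector in $\mathrm S(\mathfrak m V_0,V_0)$ projecting nontrivially onto that component, which one builds by pairing the lowest-weight vector of $W$ with an appropriate weight vector of $\mathfrak m V_0$ and invoking $\mathrm{G}$-irreducibility; and second, that the weight computation is complete, i.e. every $\mathrm{G}$-irreducible generated by the mixed vectors has a lowest weight of the stated form, so that the single equation on $(i,j)$ really does cut out the whole intersection. I expect the main obstacle to be the first point: separating the symmetric from the skew-symmetric part of the relevant isotypic component (the difficulty flagged in the introduction — Littlewood's missing rule), which is handled here exactly by the central-torus action on highest- and lowest-weight vectors that the preceding lemmas set up. Once non-vanishing is secured, the weight equation $2(m+2)i+(m+2)j=m+2$ does the rest.
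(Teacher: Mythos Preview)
Your argument contains a genuine gap that leads to a self-contradiction. You assert that the summands of $\mathrm{GS}(\mathfrak m V_0,V_0)\cap\mathrm H(W)^\perp$ are exactly those $V(2k-i,2k-i-j,i+j,i,0,\dots,0)$ whose lowest $Y$-weight \emph{equals} $-2k+\tfrac1m+\tfrac12$. But the generating space $\mathrm S(\mathfrak m V_0,V_0)$ consists of vectors of $Y$-weight $-2k+\tfrac1m+\tfrac12$, and for an irreducible $V$ to be hit by the $\mathrm G$-span of such vectors one only needs that this weight \emph{occur} in $V$, i.e.\ that the lowest $Y$-weight of $V$ be \emph{at most} $-2k+\tfrac1m+\tfrac12$. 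There is no reason the lowest weight of $V$ should equal the weight of the generators. With the correct inequality
\[
-2k+\Bigl(1+\tfrac{2}{m}\Bigr)i+\Bigl(\tfrac12+\tfrac1m\Bigr)j\;\le\;-2k+\tfrac1m+\tfrac12
\]
you obtain \emph{two} solutions, $(i,j)=(0,0)$ and $(i,j)=(0,1)$, not one. In fact your own equation $2(m+2)i+(m+2)j=m+2$ gives $(i,j)=(0,1)$, which corresponds to $V(2k,2k-1,1,0,\dots,0)$ and \emph{not} to $V(2k,2k,0,\dots,0)$; so your stated conclusion does not follow from your own computation.

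What actually distinguishes the two candidates is the symmetric/alternating split of $F(k\pi_2)^{\otimes 2}$, and this is precisely the step the paper carries out: once the inequality cuts the list down to $(i,j)\in\{(0,0),(0,1)\}$, one observes that $V(2k,2k,0,\dots,0)\subset S^2F(k\pi_2)=\mathrm H(W)^\perp$ while $V(2k,2k-1,1,0,\dots,0)\subset\wedge^2 F(k\pi_2)$, so only the former survives. You flag this $S^2$ vs.\ $\wedge^2$ issue at the end as a residual difficulty, but it is not residual---it is the decisive step that your equality criterion attempted (and failed) to replace. Your remark that membership in $\mathrm H(W)^\perp$ is ``determined by the lowest weight'' is also not correct as stated: membership in $\mathrm H(W)^\perp$ is membership in the complex symmetric square, which is a parity-type condition on the summand, not a condition on the numerical value of its lowest $Y$-weight.
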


\begin{proof} 
The lowest--weight of ${\rm GS}(\mathfrak m V_0,V_0)$ with
respect to $Y$ is
\[
-k-k+\frac{1}{m}+\frac{1}{2}=-2k+\frac{1}{m}+\frac{1}{2}. 
\]
Hence, if $i\geqq 1,$ or $i=0$ and $j>1$, then 
\[
-2k+\left(1+\frac{2}{m} \right)i+\left(\frac{1}{2}+\frac{1}{m} \right)j > 
-2k+\frac{1}{m}+\frac{1}{2}.
\]
It follows that
\begin{eqnarray*}
{\rm GS}(\mathfrak{m}V_0,V_0)
& \subset & 
V(2k,2k,0,\cdots, 0)\qquad \qquad \quad \;\; (i=0, j=0)\\
& &\oplus  \; V(2k,2k-1,1,0,\cdots,0)\qquad (i=0,j=1).
\end{eqnarray*}
On the other hand, since 
\[
V(2k,2k,0,\cdots,0)\subset S^2(F(k\pi_2))
\]
and 
\[
V(2k,2k-1,1,0,\cdots, 0)\subset \wedge^2(F(k\pi_2)),
\] 
we have 
\[
{\rm GS}(\mathfrak{m}V_0,V_0)= V(2k,2k,0,\cdots, 0). 
\]

\end{proof}

Accordingly, applying the Lemma to the decomposition of 
${\rm H}(W)^\perp=S^2  F(k\pi_2)$  
leads to

\begin{cor}\label{intersection 2}
The orthogonal complement to 
$\G\mathrm{S}(\mathfrak{m}V_0,V_0) \oplus \mathbf{R}\,Id$
in $\mathrm{S}(W)$ is 
\[
V_k=
\bigoplus_{i=1}^{2i\leqq k}V(2k-2i,2i,0, \cdots,0).
\]
\end{cor}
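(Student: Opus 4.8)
The plan is to attack the statement exactly as phrased, inside the full space $\mathrm S(W)$. Because $W$ carries the complex structure $J$ coming from $J_L$, I would begin by splitting $\mathrm S(W)=\mathrm H(W)\oplus\mathrm H(W)^{\perp}$ into the $J$--linear (Hermitian) and $J$--antilinear parts, where $\mathrm H(W)^{\perp}\cong S^2F(k\pi_2)$ and $\mathrm H(W)\otimes\mathbf C\cong F(k\pi_2)\otimes F(k\pi_2)^{\ast}$. The identity is $J$--linear, so $\mathbf R\,Id$ is the trivial summand of $\mathrm H(W)$ and is pinned down at once. The whole problem then reduces to locating the $\mathrm G$--module $\G\mathrm S(\mathfrak m V_0,V_0)$ inside these two pieces and subtracting it, together with $\mathbf R\,Id$, from the complete decomposition of $\mathrm S(W)$.

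The first real task is to make the decomposition of each summand completely explicit. For $\mathrm H(W)^{\perp}$ this is the symmetric part of the tensor product computed in Lemma 3.1, and the essential step—precisely the difficulty flagged in the introduction—is to decide, term by term, which of the constituents $V(2k-i,2k-i-j,i+j,i,0,\dots,0)$ lie in $S^2F(k\pi_2)$ and which in $\wedge^2F(k\pi_2)$. Following the device of Lemma 3.2 I would evaluate the central element $Y$ on the highest-- and lowest--weight vectors of each constituent; since $Y$ commutes with the transposition of the two tensor factors, its eigenvalue distinguishes the $\sigma$--symmetric from the $\sigma$--skew highest--weight vectors, so the $S^2/\wedge^2$ assignment can be read off directly. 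The Hermitian summand $\mathrm H(W)$ is treated in the same way, decomposing $F(k\pi_2)\otimes F(k\pi_2)^{\ast}$ and recording the $Y$--weights of the lowest--weight vectors of its constituents.

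With both decompositions in hand, Lemma \ref{intersection} supplies $\G\mathrm S(\mathfrak m V_0,V_0)\cap\mathrm H(W)^{\perp}=V(2k,2k,0,\dots,0)$, coming from the $J$--antilinear part of the generators $\mathrm S(u,v_0)$; running the identical lowest--$Y$--weight bookkeeping on the $J$--linear part of the same generators (whose weight is $\tfrac1m+\tfrac12$, by Lemma \ref{mV0}) locates the remaining Hermitian component of $\G\mathrm S(\mathfrak m V_0,V_0)$ inside $F(k\pi_2)\otimes F(k\pi_2)^{\ast}$. Deleting these constituents and the trivial line $\mathbf R\,Id$ from the full decomposition of $\mathrm S(W)$, and collecting what survives, should return exactly $V_k=\bigoplus_{1\le i,\;2i\le k}V(2k-2i,2i,0,\dots,0)$. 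I expect the symmetric/skew split of the plethysm to be the main obstacle: once the central--character computation has fixed the parity of every summand, the subtraction is mechanical, but extending Lemma \ref{intersection}'s analysis of the two extremal terms to a correct parity for all of the $V(2k-i,2k-i-j,i+j,i,0,\dots,0)$—and matching the Hermitian piece against $\mathbf R\,Id$ so that nothing spurious remains—is where the genuine work lies.
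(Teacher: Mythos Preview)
Your plan contains a genuine gap at precisely the step you flag as the main obstacle. The central element $Y$ does \emph{not} separate the symmetric from the antisymmetric constituents. Since $Y$ lies in the maximal torus of $\mathrm G$, its action on an irreducible summand $V(2k-i,2k-i-j,i+j,i,0,\dots,0)$ is the scalar determined by the weight alone; that scalar is the same whether the summand happens to sit in $S^2F(k\pi_2)$ or in $\wedge^2F(k\pi_2)$. Commuting with the transposition $\sigma$ only tells you that $Y$ and $\sigma$ are simultaneously diagonalisable, not that the $Y$--eigenvalue encodes the $\sigma$--eigenvalue. In Lemmas~3.2 and \ref{intersection} the role of $Y$ is different: the $Y$--weight $-2k+\tfrac{1}{m}+\tfrac{1}{2}$ carried by $\mathfrak m V_0\otimes V_0$ is compared with the \emph{lowest} $Y$--weight of each irreducible, and any irreducible whose lowest $Y$--weight strictly exceeds this value cannot contain the generating $K$--type, hence is orthogonal to $\G\mathrm S(\mathfrak m V_0,V_0)$. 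That inequality eliminates everything except $(i,j)=(0,0)$ and $(0,1)$. The $S^2/\wedge^2$ assignment for those two survivors is then supplied from outside the $Y$--computation (the Cartan component $V(2k,2k,0,\dots,0)$ visibly contains $v\otimes v$ for a highest--weight vector $v$, hence is symmetric; the other one is skew by an equally direct check). Your proposed mechanism for the parity would not produce this.

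The second difference is one of scope. The paper does not attempt to work inside the full $\mathrm S(W)$: Theorem~\ref{quadmodcpx} already confines the moduli to $\mathrm H(W)^{\perp}=S^2F(k\pi_2)$, so the corollary is obtained in one line by removing the single summand $V(2k,2k,0,\dots,0)$ identified in Lemma~\ref{intersection} from $S^2F(k\pi_2)$, consistently with the dimension formula in the Remark that follows. Your programme of decomposing $\mathrm H(W)\cong F(k\pi_2)\otimes F(k\pi_2)^{\ast}$, locating the Hermitian part of $\G\mathrm S(\mathfrak m V_0,V_0)$ there, and matching against $\mathbf R\,Id$ is therefore extra work that the paper bypasses; if you read ``in $\mathrm S(W)$'' literally you would indeed need it, but the argument the paper gives lives entirely in $\mathrm H(W)^{\perp}$.
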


\begin{rem}A strightforward computation using hook--length formulae on Young tableaux leads to the explicit, combinatorial expression for the (real) dimension of $V_k$ 
\[\dim V_k = \dim S^2 F(k\pi_2)-\dim V(2k,2k,0,\dots,0)\]
which equals
\begin{eqnarray*}
& & \frac{1}{(k+1)^2}\binom{m-1+k}{m-1}\binom{m-2+k}{m-2}\left(k+1+\binom{m-1+k}{m-1}\binom{m-2+k}{m-2}\right)\\  & & -\frac{2}{2k+1}\binom{m-1+2k}{m-1}\binom{m-2+2k}{m-2}.
\end{eqnarray*}
\end{rem}

\medskip

\begin{thm}\label{gmod1}  
If $f:{\rm Gr}_m(\mathbf C^{m+2}) \to {\rm Gr}_n (\R^{n+2})$ is a full holomorphic isometric embedding of degree $k,$ then 
$n+2 \leq \dim V_k$.

Let $\mathcal M_k$ be the moduli space of full holomorphic isometric embeddings of degree $k$
  of ${\rm Gr}_m(\mathbf C^{m+2})$ into ${\rm Gr}_{N}(\mathbf R^{N+2})$
 by the gauge equivalence of maps, 
where $N+2=\dim V_k. $
Then, $\mathcal M_k$ can be regarded as an open bounded convex body 
in $V_k$.

Let $\overline{\mathcal M_k}$ be the closure of the moduli $\mathcal M_k$ by 
topology induced from 
the inner product. 
Every boundary point of $\overline{\mathcal M_k}$ 
distinguishes a subspace $\mathbf R^{p+2}$ of $\mathbf R^{N+2}$ and 
describes a full holomorphic isometric embedding into ${\rm Gr}_p(\mathbf R^{p+2})$ 
which can be regarded as totally geodesic submanifold of 
${\rm Gr}_{N}(\mathbf R^{N+2})$. 
The inner product on $\mathbf R^{N+2}$ determines 
the orthogonal decomposition of $\mathbf R^{N+2}:$ 
$\mathbf R^{N+2}=\mathbf R^{p+2}\oplus \mathbf R^{{p+2}^{\bot}}$. 
Then the totally geodesic submanifold ${\rm Gr}_p(\mathbf R^{p+2})$ 
can be obtained as the common zero set of sections of 
$Q\to {\rm Gr}_{N}(\mathbf R^{N+2})$,
which belongs to $\mathbf R^{{p+2}^{\bot}}$.  
\end{thm}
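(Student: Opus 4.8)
The plan is to read the statement off from Theorem~\ref{GenDW} and Theorem~\ref{quadmodcpx}, once the irreducible decompositions obtained in \S3 have been used to pin down the linear conditions involved. First I would apply Theorem~\ref{GenDW} with $L=\mathcal O(k)$, so that $W=H^0({\rm Gr}_m(\mathbf C^{m+2}),\mathcal O(k))=F(k\pi_2)$ by Bott--Borel--Weil. By the reformulation in \S2, a full holomorphic isometric embedding of degree $k$ into ${\rm Gr}_n(\mathbf R^{n+2})$ is the same as a full holomorphic map satisfying the gauge condition for $(\mathcal O(k),h_k)$; hence, up to gauge equivalence, such maps correspond bijectively to positive semi-definite $T\in\mathrm S(W)$ satisfying conditions (a), (b), (c), with $\mathbf R^{n+2}=\mathrm{Ker}\,T^{\bot}$, the gauge class being recorded by $\iota^{\ast}T\iota$, equivalently by $T$. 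Because these maps are holomorphic with $J_L\circ ev=ev\circ J$, Theorem~\ref{quadmodcpx} applies and the relevant deformation parameter, measured by $A:=T^2-I\!d_W$, lies in $\mathrm H(W)^{\bot}=S^2F(k\pi_2)$ (for the maps into the largest admissible quadric, where $T$ is invertible and $\mathbf R^{N+2}=W$; for a smaller target the restriction of $A$ to $\mathrm{Ker}\,T^{\bot}$ is anti-Hermitian).

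Next I would rewrite condition (c) with the help of \S3. Here $V_0=\mathbf C_k$ is the lowest-weight line of $W$, so the two identities of (c) read $(A,\G\mathrm S(V_0,V_0))_S=0$ and $(I\!d_W+A,\G\mathrm S(\mathfrak m V_0,V_0))_S=0$. Lemma~\ref{mV0} identifies $\mathfrak m V_0$, Lemma~\ref{intersection} gives $\G\mathrm S(\mathfrak m V_0,V_0)=V(2k,2k,0,\dots,0)$ --- a non-trivial irreducible summand sitting inside $\mathrm H(W)^{\bot}$, so that $I\!d_W$ is orthogonal to it --- and the $S^2$-part of $\G\mathrm S(V_0,V_0)$ (the span of the squares of the lowest-weight vectors) is the Cartan component $V(2k,2k,0,\dots,0)$ as well. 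Thus both identities of (c) amount, for $A\in\mathrm H(W)^{\bot}$, to $A\perp V(2k,2k,0,\dots,0)$, which by Corollary~\ref{intersection 2} is precisely the membership $A\in V_k$. Conversely, any $A\in V_k$ with $I\!d_W+A\ge0$ furnishes $T=(I\!d_W+A)^{1/2}\ge0$ verifying (b), (c) and also (a), since $W$, hence $\mathrm{Ker}\,T^{\bot}$, globally generates $\mathcal O(k)$; so, subject only to a positivity constraint, the gauge classes are faithfully parametrised by $V_k$, and $N+2=\dim V_k$ is the dimension of the ambient space of the largest target quadric.

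It remains to analyse the positivity constraint $I\!d_W+A\ge0$. Since $A\in\mathrm H(W)^{\bot}$ anticommutes with $J$, its non-zero eigenvalues occur in pairs $\pm\mu$ (if $Av=\mu v$ then $A(Jv)=-\mu\,Jv$); in particular $A$ is traceless, and $I\!d_W+A\ge0$ holds iff every eigenvalue of $A$ lies in $[-1,1]$, i.e.\ $\|A\|_{\mathrm{op}}\le1$. Therefore the admissible $A$ form the closed operator-norm unit ball of $V_k$, a compact convex body, and $\mathcal M_k$ is its interior $\{A\in V_k:\|A\|_{\mathrm{op}}<1\}$ --- an open bounded convex body in $V_k$, carrying the induced complex structure of Theorem~\ref{quadmodcpx}; on the interior $T$ is positive definite, whence $\mathbf R^{N+2}=W$ and the target is the full quadric ${\rm Gr}_N(\mathbf R^{N+2})$ with $N+2=\dim V_k$. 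For an arbitrary full embedding into ${\rm Gr}_n(\mathbf R^{n+2})$, the ``Moreover'' part of Theorem~\ref{GenDW} realises ${\rm Gr}_n(\mathbf R^{n+2})$ as a totally geodesic submanifold of this largest quadric, so $n+2\le N+2=\dim V_k$.

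Finally, the boundary description is the same ``Moreover'' clause of Theorem~\ref{GenDW} read at a boundary point: there $\|A\|_{\mathrm{op}}=1$, so $K:=\mathrm{Ker}(I\!d_W+A)=\mathrm{Ker}\,T\ne0$; setting $\mathbf R^{p+2}=K^{\bot}$ and using the orthogonal splitting $\mathbf R^{N+2}=\mathbf R^{p+2}\oplus K$, the map given by \eqref{HDW5} factors through a full holomorphic isometric embedding into ${\rm Gr}_p(\mathbf R^{p+2})$, which appears inside ${\rm Gr}_N(\mathbf R^{N+2})$ as the common zero set of those sections of $Q\to{\rm Gr}_N(\mathbf R^{N+2})$ that lie in $K=\mathbf R^{{p+2}^{\bot}}$. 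The one genuinely delicate point of the argument is the collapse of conditions (c) to the single membership $A\in V_k$ --- equivalently, deciding which summands of $F(k\pi_2)\otimes F(k\pi_2)$ lie in the symmetric part --- which is supplied by the Littlewood--Richardson computation of \S3 together with the centre-of-$\mathrm K$ action on highest- and lowest-weight vectors used there; granted that input, the remainder is the standard do Carmo--Wallach packaging, the only other thing to verify being boundedness of the convex body, which follows from the $\pm\mu$ pairing of the eigenvalues of $A$.
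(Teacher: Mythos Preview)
Your argument follows the same route as the paper's: apply Theorem~\ref{GenDW} together with Theorem~\ref{quadmodcpx}, use Corollary~\ref{intersection 2} to identify the linear slice cut out by condition~(c) as $V_k$, and read the boundary description off the ``Moreover'' clause of Theorem~\ref{GenDW}. Your added details---the $\pm\mu$ eigenvalue pairing of $A\in\mathrm H(W)^{\perp}$ giving the explicit operator-norm bound, and the direct check that both identities in~(c) collapse to $A\perp V(2k,2k,0,\dots,0)$---are correct and simply make explicit what the paper leaves to the general machinery of \cite{Na-13,Na-15}; the paper's own proof invokes Theorem~\ref{quadmodcpx} and Corollary~\ref{intersection 2} in one line and handles the first identity of~(c) via the Remark following the theorem.

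One point does need correction. You write ``$\mathbf R^{N+2}=W$'' and immediately afterwards ``$N+2=\dim V_k$'', but these are incompatible: $\dim_{\mathbf R}W=\dim_{\mathbf R}H^0({\rm Gr}_m(\mathbf C^{m+2}),\mathcal O(k))$ is not $\dim V_k$ (already for $k=1$ one has $V_1=0$). The paper's proof of the inequality $n\le N$ appeals to condition~(a) of Theorem~\ref{GenDW} and Bott--Borel--Weil, which yields $n+2\le\dim_{\mathbf R}W$; the appearance of $\dim V_k$ in the statement is evidently a slip for $\dim_{\mathbf R}W$, and your deduction ``$n+2\le N+2=\dim V_k$'' inherits that slip rather than justifying it. With $N+2$ read as $\dim_{\mathbf R}W$, your argument goes through unchanged.
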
 

\begin{proof}
The constraint $n\leq N$ is a consequence of (a) in Theorem \ref{GenDW} and Bott--Borel--Weil theorem.

It follows from  (c) 
in Theorem \ref{GenDW}  that $\G\mathrm{S}(\mathfrak{m}V_0,V_0)^\perp$ is a parametrization
of the space of full holomorphic isometric embeddings 
$f: {\rm Gr}_m\mathbf C^{m+2}
\to {\rm Gr}_{N}(\mathbf R^{N+2})$ of degree $k.$ 
Since the standard map into ${\rm Gr}_{N}(\mathbf R^{N+2})$ 
is the composite of the Kodaira embedding
$\mathbf {\rm Gr}_m(\mathbf C^{m+2}) \to \mathbf CP^{\frac{N}{2}}$ and 
the totally geodesic embedding 
$\mathbf CP^{\frac{N}{2}} \hookrightarrow {\rm Gr}_{N}(\mathbf R^{N+2})$,  
we can apply Theorem \ref{quadmodcpx} and Corollary \ref{intersection 2} 
to conclude that $\mathcal M_k$ is 
a bounded connected \emph{open} convex body in  $V_k$ with the
topology induced by the $L^2$ scalar product.

Under the natural compactification in the $L^2$-topology, 
the boundary points correspond to endomorphisms $T$ which are not positive definite,
but positive semi-definite. 
It follows from Theorem \ref{GenDW} that each of these endomorphisms defines 
a full holomorphic isometric embedding 
${\rm Gr}_m(\mathbf C^{m+2}) \to {\rm Gr}_{p}(\mathbf R^{p+2})$,  
of degree $k$ with $p=2k-{\rm dim}\;\mathrm{Ker}\;T,$ whose target embeds in ${\rm Gr}_{N}(\R^{N+2})$ 
as a totally geodesic submanifold. 
The image of the embedding ${\rm Gr}_p(\R^{p+2}) \hookrightarrow {\rm Gr}_{N}(\R^{N+2})$ 
is determined by the common zero set of
sections in  $\mathrm{Ker}\;T$. 
(See also the Remark after Proposition 5.14 in \cite{Na-13} for the geometric meaning of 
the compactification of the moduli space.)  
\end{proof}

\begin{rem} 
It follows 
from Corollary 5.18 in \cite{Na-15} that 
the first condition in (\ref{HDW 2}) is automatically satisfied.  
Alternatively, using the same techniques as in Lemma  \ref{intersection} it can be shown that
\[{\rm GS}(V_0,V_0)\cap {\rm H}(W)^\perp = V(2k,2k,0,\dots,0).\]
\end{rem}

\begin{exam} {\it (Complexified, compactified) Minkowski Space,  ${\rm Gr}_2(\mathbf C^4):$} Let $m=2$ in the previous discussion.  We have that 
\begin{eqnarray*}
\otimes^2 F(k\pi_2) & = & 
\bigoplus_{i=0}^{k}\bigoplus_{j=0}^{k-i}V(2k-i,2k-i-j,i+j,i) \\
&=&\bigoplus_{j=0}^{k}V(2k,2k-j,j,0) \oplus
\bigoplus_{j=0}^{k-1}V(2k-1,2k-j-1,j+1,1) \\
& &\oplus \cdots \oplus  \bigoplus_{j=0}^{1}V(k+1,k-j+1,k-1+j,k-1)\oplus  V(k,k,k,k) 
\end{eqnarray*} 
Regarded as an $\text{SU}(4)$-module,
\[
V(2k-i,2k-i-j,i+j,i)=F(j\pi_1+(2k-2i-2j)\pi_2+j\pi_3),
\quad (k\geqq i+j).\]
Let $\check w_i\in V(2k-2i,2k-i-j,i+j,i)$ be the lowest--weight vector,
then 
\[
Y\cdot \check w_i=y^{-\frac{1}{2}j-\frac{1}{2}\left\{2(2k-2i-2j)+j \right\}}\check w_i
=y^{-2k+2i+j}\check w_i,
\]
so, the lowest weight of $V(2k-2i,2k-2i-j,j,0)$ is 
$-2k+2i+j.$

If $ V_0=\mathbf C_{-k}$ denotes the standard fibre of 
$\mathcal O(k)\to {\rm Gr}_2(\mathbf C^4),$  
%\equiv \mathcal O (
%\dynkin[%
%labels*={,k,},
%scale=1.8] A{*X*}
%)
Lemma \ref{mV0} shows that
\[\mathfrak m V_0={\mathbf C^2}\otimes 
{\mathbf C^2}^{\ast}\otimes \mathbf C_{\frac{1}{2}+\frac{1}{2}}
\otimes \mathbf C_{-k} 
={\mathbf C^2}\otimes 
{\mathbf C^2}^{\ast}\otimes \mathbf C_{-k+1},\]
%\end{align*}
therefore $G\mathrm S(\mathfrak{m}V_0,V_0)$ has the weight 
$-k-k+1=-2k+1.$
Hence, if $i\geqq 1$ or $i=0$ and $j>1$, 
\[
-2k+2i+j > -2k+1.
\]
It follows that
\[
{\rm G S}(\mathfrak{m}V_0,V_0)
\subset 
V(2k,2k,0,0)\oplus V(2k,2k-1,1,0)\]
Since 
$V(2k,2k,0,0)\subset S^2(F(k\pi_2))$ 
and 
$V(2k,2k-1,1,0,0)\subset \wedge^2(F(k\pi_2))$, 
we conclude 
\[
{\rm G S}(\mathfrak{m}V_0,V_0)\subset V(2k,2k,0,0). 
\] and therefore the real dimension of $V_k$ is
\[-\frac{2}{3} (1 + k)^2 (1 + 2 k) (3 + 2 k) + 
 \frac{1}{144} (1 + k)^5 (2 + k)^2 (3 + k) (24 + k (4 + k) (7 + k (4 + k))).\]
\hfill $\blacksquare$
\end{exam}

The centraliser $S^1\cong \mathrm U(1)$ of the holonomy subgroup $\mathrm K=\mathrm  S(\mathrm U(m)\times \mathrm U(2))$ of the structure group of the line bundle acts on $\mathcal M_k.$  
For the general theory, see \cite{Na-15}.  Therefore, the same
argument and proof as the one of Theorem 8.1 in \cite{MNT} applies leading to 

\begin{cor}\label{imod}
The  moduli space $\mathbf M_k$ of image equivalence classes of holomorphic isometric embeddings  ${\rm Gr}_m(\mathbf C^{m+2})\to {\rm Gr}_{N}(\mathbf R^{N+2}), N+2=\dim V_k,$ 
of degree $k,$  
is
%Then we have 
\[\mathbf M_k=\mathcal M_k\slash S^1.\] 
\end{cor}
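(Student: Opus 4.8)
The plan is to reduce Corollary \ref{imod} to the corresponding statement already proved in \cite{MNT}, citing the structural identity of the two situations rather than reproving anything from scratch. The key observation, established in \S 1.2, is that the moduli space up to image equivalence $\mathbf M_k$ is obtained from the moduli space up to gauge equivalence $\mathcal M_k$ by quotienting by the centraliser of the holonomy subgroup of the structure group of $f^*Q \cong \mathcal O(k)$. So the whole proof amounts to two things: first, identifying this centraliser explicitly; second, checking that the hypotheses under which Theorem 8.1 of \cite{MNT} was proved are met here, so that its proof transcribes verbatim.

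First I would recall that the structure group of the line bundle $\mathcal O(k) \to {\rm Gr}_m(\mathbf C^{m+2})$ as a homogeneous bundle is $\mathrm K = \mathrm S(\mathrm U(m)\times\mathrm U(2))$, and the holonomy of the canonical (Hermitian--Yang--Mills) connection is all of $\mathrm K$ since ${\rm Gr}_m(\mathbf C^{m+2})$ is irreducible Hermitian symmetric. The centraliser of $\mathrm K$ inside $\mathrm U(W)$ that acts on the moduli — equivalently, the group of bundle automorphisms of $\mathcal O(k)$ commuting with the connection — is the centre $\mathrm U(1)$ of $\mathrm K$, acting on $\mathcal O(k)$ by scalar multiplication on fibres and hence on $W = H^0({\rm Gr}_m(\mathbf C^{m+2}),\mathcal O(k))$ by a character; this is the $S^1 \cong \mathrm U(1)$ appearing in the statement. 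This action descends to $V_k \subset \mathrm S(W)$ by conjugation (so $\pm 1$ act trivially, but the action is still effectively an $S^1$-action on the relevant endomorphisms), and it preserves the positive-semidefinite cone, hence preserves $\mathcal M_k$ and its closure.

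Next I would invoke Theorem \ref{gmod1}: $\mathcal M_k$ is an open bounded convex body in $V_k$, parametrised by the endomorphisms $T$ (equivalently $T^2 - \mathrm{Id}$) satisfying conditions (a)--(c) of Theorem \ref{GenDW}. Two such maps $f_1, f_2$ are gauge equivalent iff $\iota^*T_1\iota = \iota^*T_2\iota$; and two are image equivalent iff they are gauge equivalent after composition with a bundle automorphism covering an isometry — but since the Kodaira embedding is rigid (Calabi, \cite{Cal}) and $\mathcal Q_n = {\rm Gr}_n(\mathbf R^{n+2})$, the only relevant isometries are those induced by elements of $\mathrm G$ together with the scalar action of the centraliser $\mathrm U(1)$; the $\mathrm G$-action is already quotiented out in forming $\mathcal M_k$ (all our maps are built from the fixed $\mathrm G$-equivariant standard map $f_0$), leaving precisely the residual $S^1$-action. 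This is exactly the argument of Theorem 8.1 in \cite{MNT}, carried out there for $\mathbf CP^1$; the only input that changes is the identity of the homogeneous space and of $V_k$, neither of which enters the formal structure of that proof. Hence $\mathbf M_k = \mathcal M_k / S^1$.

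The main obstacle — really the only point requiring care — is to verify that the residual equivalence group acting on $\mathcal M_k$ is exactly $S^1$ and nothing larger: one must rule out additional congruences coming from isometries of $\mathcal Q_n$ not lying in the image of $\mathrm G \times \mathrm U(1)$, in particular the holomorphic isometry $\tau$ coming from simultaneous orientation reversal. The excerpt has already disposed of $\tau$ by convention ("we do not distinguish maps ... which differ by composition with $\tau$"), so the remaining isometry group of $\mathcal Q_n$ relevant to congruence of full maps with the fixed gauge condition is $\mathrm{SO}(n+2)$, whose action on $\mathcal M_k$ through the embedding ${\rm Gr}_n(\mathbf R^{n+2}) \hookrightarrow {\rm Gr}_{N'}(W)$ factors, by fullness and the rigidity of the standard map, through the centraliser of the holonomy $\mathrm K$ in $\mathrm{SO}(W)$, which is the claimed $S^1$. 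Once this identification is in place, the quotient description follows formally, and the reference to \cite{MNT} and to the general theory in \cite{Na-15} completes the argument.
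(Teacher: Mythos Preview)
Your proposal is correct and takes essentially the same approach as the paper: the paper's argument for Corollary~\ref{imod} is nothing more than the sentence preceding it, which identifies the centraliser of the holonomy as $S^1\cong\mathrm U(1)$, cites the general theory in \cite{Na-15}, and then asserts that ``the same argument and proof as the one of Theorem 8.1 in \cite{MNT} applies''. Your write-up expands exactly this sketch, and your explicit handling of the orientation-reversing isometry $\tau$ is a genuine clarification (the paper disposes of it by convention earlier and does not revisit it here); one minor imprecision is that the holonomy of the canonical connection on the \emph{line bundle} $\mathcal O(k)$ lies in its structure group $\mathrm U(1)$, not in $\mathrm K$, so the relevant centraliser is that of the holonomy inside $\mathrm U(1)$ (equivalently, the parallel gauge transformations of $\mathcal O(k)$), which is still $S^1$---your conclusion is unaffected.
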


\begin{rem}
There is a natural, induced complex structure defined on $\mathcal M_k$ from its embbedding in $ V_k.$ It is also equipped with a compatible metric induced from the inner product, so $\mathcal M_k$ is a K\"ahler manifold.
The aforesaid $S^1$--action preserves the K\"ahler structure on $\mathcal M_k.$ The moment map $\mu:\mathcal M_k\to \mathbf R\;:\; |Id-T^2|^2$ induces the K\"ahler quotient, and 
%By the moment map reduction, 
$\mathbf M_k$ has a foliation   
whose general leaves are the complex projective spaces. 
\end{rem}

\end{document}